\documentclass{amsart}
\usepackage[normalem]{ulem}
\usepackage[usenames]{color}
\usepackage[abbrev]{amsrefs}
\usepackage{amssymb}
\usepackage{tikz}
\usetikzlibrary{arrows.meta}
\usepackage{caption}
\usepackage{subcaption}

 \newcommand{\R}{\mathbb R}
 \newcommand{\Z}{\mathbb Z}

\DeclareMathOperator{\Int}{int}
\DeclareMathOperator{\cl}{cl}

\newcommand{\grid}{\mathcal{G}}
\newcommand{\diam}{\operatorname{diam}}
\newcommand{\fm}{{\mathcal F}}
\newcommand{\fmgg}[1]{\fm^{#1}}
\newcommand{\fmgk}[1]{\fmgg{\grid_{#1}}}
\newcommand{\fmg}{\fmgg\grid}
\newcommand{\hatfm}{{\widehat{\mathcal F}}}
\newcommand{\hatfmgg}[1]{\hatfm^{#1}}
\newcommand{\hatfmg}{\hatfmgg\grid}
\newcommand{\tildefm}{{\widetilde{\mathcal F}}}
\newcommand{\tildefmgg}[1]{\tildefm^{#1}}
\newcommand{\tildefmg}{\tildefmgg\grid}
\newcommand{\morsed}{{\mathcal M}}
\newcommand{\morsedg}[1]{\morsed^{#1}}
\newcommand{\morsedk}[1]{\morsedg{\grid_{#1}}}
\newcommand{\morses}{{\mathcal S}}
\newcommand{\morsesg}[1]{\morses^{#1}}
\newcommand{\morsesk}[1]{\morsesg{\grid_{#1}}}
\newcommand{\hm}{{\bar h}}
\newcommand{\morsea}{{\mathfrak M}}
\newcommand{\morseag}[1]{\morsea^{#1}}
\newcommand{\morseak}[1]{\morseag{\grid_{#1}}}
\newcommand{\ha}{{\mathfrak h}}
\newcommand{\morseg}{{\mathsf M}}
\newcommand{\morsegg}[1]{\morseg^{#1}}
\newcommand{\morsegk}[1]{\morsegg{\grid_{#1}}}

\newcommand{\hmg}{\mathsf h}

\newcommand{\incomp}{\not\gtreqqless}
\newcommand{\per}{\operatorname{per}}

\theoremstyle{plain} \newtheorem{thm}{Theorem}
\newtheorem{cor}[thm]{Corollary} \newtheorem{prop}[thm]{Proposition}
\newtheorem{lemma}[thm]{Lemma}

\theoremstyle{definition} \newtheorem{defn}[thm]{Definition}

\newtheorem{ex}[thm]{Example} 

\theoremstyle{remark} \newtheorem{remark}[thm]{Remark}

\numberwithin{thm}{section}

\begin{document}

\title[Persistence of Morse decompositions]{Persistence of Morse decompositions over grid resolution for maps and time series}

  \author{Jim Wiseman}    \address{Agnes Scott College \\ Decatur, GA 30030} \email{jwiseman@agnesscott.edu}

\thanks{This work was supported by a grant from the Simons Foundation (282398, JW)}
\keywords{Morse decomposition, persistent homology, time series}

\subjclass[2020]{55N31, 37B20, 37B35, 37B65, 37E25, 37M10}

\begin{abstract}
We can approximate a continuous map $f:X\to X$ of a compact metric space by discretizing the space into a grid.  Through either the map itself or a time series, $f$ induces a multivalued grid map $\fm$.  The dynamical properties of $\fm$ depend on the resolution of the grid, and we study the persistence of these properties as we change the resolution.  In particular, we look at the persistence of Morse decompositions, at both the global  (Morse graph) and local (individual Morse set) levels, using several notions of persistence -- graph structure, persistent homology, and mixing properties.
\end{abstract}

\maketitle

%
%
%
%
%
%
%
%
%
%
%
%

\section{Introduction}

In order to understand the dynamics of a continuous map $f:X\to X$ of a compact metric space, we can create a finite discretization of the space, then use a computer to create a multivalued map $\fm$ on the discretization to approximate $f$.
The relationship between the dynamics of $f$ and those of $\fm$ is an active area of research (see, for example, \cites{Osi,OsiC,aacr,Akin,lp,paG,Hunt,Norton}), and as the computational technology improves, understanding this relationship becomes increasingly important.
This is true whether the map $\fm$ is constructed directly, from the action of $f$ on the elements of the discretization, or indirectly, from a sample of the dynamics (a time series).

We will discretize the space using a grid (defined in  Section~\ref{sect:grids}).  The behavior of the map $\fm$ depends on the resolution of the grid, perhaps especially in the case of maps reconstructed from time series.  This is discussed in \cite{bmmp}: too coarse a grid will give only a very rough approximation of $f$, while too fine a grid can isolate each individual data point.  As stated there, it is an interesting problem to understand this dependence of the dynamics on grid resolution in the spirit of persistent topology, as in \cites{ejm,bejm,djkklm}.

This paper studies persistence of Morse decompositions as a step in that direction.  A Morse decomposition (\cite{Conley}) is a collection of invariant sets of recurrent points (Morse sets), such that all other points in the space move from one Morse set to another.  This notion is very useful in computational dynamics (see, for example, \cites{aacr,akkmop,bmmp,BK,BKMW,DK,ghmkk,KKV,mik}).  The persistent homology of Morse sets has been studied in \cite{Szymczak} (zero-dimensional persistence for vector fields) and more generally in \cite{djkklm}.  In \cite{djkklm}, the authors use Alexandrov topology to study the persistent homology of the Morse sets (where the persistence can be over different kinds of parameters).  In this paper we take a different approach, looking at the Morse decomposition more broadly, including the connections between the different Morse sets (the Morse graph) and the dynamics on the individual Morse sets.

The paper is organized as follows.  In Section~\ref{sect:grids}, we discuss grids on $X$ and the multivalued grid maps induced by the map $f:X\to X$.  We define recurrence and Morse decompositions in Section~\ref{sect:MorseDefn}, and define and prove persistence for grid maps and Morse decompositions in Section~\ref{sect:persist}.  We discuss Morse graphs and persistence of the global dynamics in Section~\ref{sect:globalPers}, and local persistence of individual Morse sets in Section~\ref{sect:localPers}.  Finally, in Section~\ref{sect:timeSeries}, we apply our results to grid maps generated by time series.

\section{Grids and induced maps}\label{sect:grids}

Let $X$ be a  compact metric space, and $f:X\to X$ a continuous map.  In many cases $X$ will be a subset of $\R^n$. We discretize the space using a grid.  In defining grids on $X$ and the induced maps, we mostly follow the notation of \cite{aacr}; see \cite{Day} for an introduction.

\begin{defn}[\cite{mrGrid}]
A \emph{grid} $\grid$ on $X$ is a finite collection of nonempty compact subsets of $X$ such that
	\begin{enumerate}
	\item $X = \bigcup_{G\in\grid}G$
	\item $G = \cl(\Int(G))$ for all $G\in\grid$
	\item $G \cap \Int(H) = \emptyset$ for all $G\ne H \in\grid$.
	\end{enumerate}

\end{defn}

When $X$ is a subset of $\R^n$, we  often take $\grid$ to be a cubical grid.  In general, we define the \emph{diameter}, or \emph{resolution}, of $\grid$ by $\diam(\grid) = \max_{G\in\grid} \diam(G)$.
The \emph{geometric realization $|\cdot|$} is a map from the power set of $\grid$ to the power set of $X$, given by $|\mathcal A|= \bigcup_{G\in\mathcal A}G$.

Let $\grid$ and $\grid'$ be two grids on $X$.  We say that \emph{$\grid$ refines $\grid'$}, or \emph{$\grid$ is a refinement of $\grid'$}, or \emph{$\grid'$ is a coarsening of $\grid$}, and write $\grid < \grid'$, if for every element $G\in\grid$ there is a $G'\in\grid'$ such that $G\subset G'$.

We can use the map $f$ directly to generate a multivalued map on $\grid$.  In section~\ref{sect:timeSeries}, we use time series to generate the map.

Given a compact metric space $X$, a continuous map $f: X\to X$, and a grid $\grid$ on $X$, we define the \emph{minimal multivalued map $\fmg$ associated to $f$ on} $\grid$ by $\fmg(G) := \{H \in\grid : H\cap f(G)\ne\emptyset\}$.  Note that $\fmg: \grid \rightrightarrows \grid$ is an outer approximation of $f:X \to X$, meaning that $f(G) \subset \Int(|\fmg(G|)$ for every $G\in\grid$ (\cite{aacr}*{Prop.~2.5}).

An \emph{orbit} for  a multivalued map $\fm:\grid\rightrightarrows\grid$ is a (possibly infinite or bi-infinite) sequence $\{G_i\}$ of grid elements such that $ G_{i+1} \in \fm(G_i)$ for each $i$.

We can identify a multivalued map $\fm:\grid\rightrightarrows\grid$ with a directed graph, also denoted $\fm$, with  the grid elements $G$ of $\grid$ as the vertices and an edge from $G$ to $H$ if $H\in\fm(G)$.  An orbit for the map corresponds to a path in the graph.

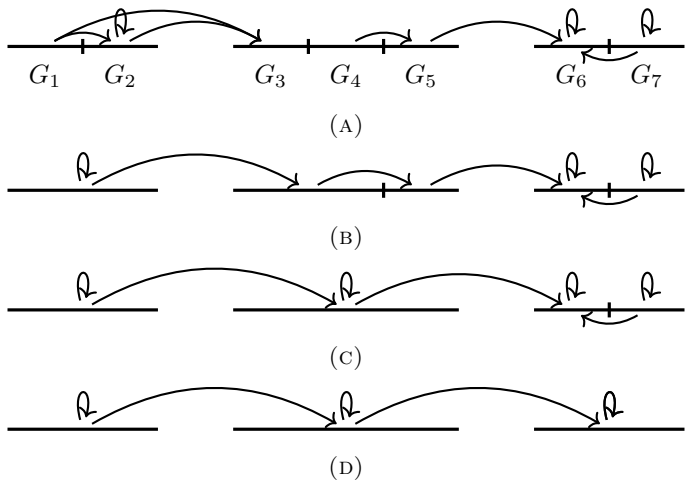
\begin{figure}

\begin{subfigure}{\textwidth}
\centering
\begin{tikzpicture}

\draw (0,0) edge[very thick] (2,0);
\draw[-][very thick] (1,-.1) -- (1,0.1);
\node [label=below:{$G_1$}] (A) at (.5,0) {};
\node  [label=below:{$G_2$}] (B)  at (1.5,0) {};

\draw (3,0) edge[very thick] (6,0);
\draw[-][very thick] (4,-.1) -- (4,0.1);
\draw[-][very thick] (5,-.1) -- (5,0.1);
\node  [label=below:{$G_3$}] (C) at (3.5,0) {};
\node  [label=below:{$G_4$}] (D)  at (4.5,0) {};
\node  [label=below:{$G_5$}] (E)  at (5.5,0) {};

\draw (7,0) edge[very thick] (9,0);
\draw[-][very thick] (8,-.1) -- (8,0.1);
\node  [label=below:{$G_6$}] (F)  at (7.5,0) {};
\node  [label=below:{$G_7$}] (G) at (8.5,0) {};

\draw [->] (B) edge[thick, loop above] (B);
\draw [->] (A) edge[thick, bend left] (B);
\draw [->] (A) edge[thick,bend left] (C);
\draw [->] (B) edge[thick,bend left] (C);
\draw [->] (D) edge[thick,bend left] (E);
\draw [->] (E) edge[thick,bend left] (F);
\draw (F) edge[thick, loop above] (F);
\draw (G) edge[thick, loop above] (G);
\draw [->] (G) edge[thick,bend left] (F);

\end{tikzpicture}
\caption{}
\label{fig:ex1}
\end{subfigure}

\begin{subfigure}{\textwidth}
\centering
\begin{tikzpicture}

\draw (0,0) edge[very thick] (2,0);
\node (A) at (1,0) {};

\draw (3,0) edge[very thick] (6,0);
\draw[-][very thick] (5,-.1) -- (5,0.1);
\node   (C) at (4,0) {};
\node   (E)  at (5.5,0) {};

\draw (7,0) edge[very thick] (9,0);
\draw[-][very thick] (8,-.1) -- (8,0.1);
\node   (F)  at (7.5,0) {};
\node  (G) at (8.5,0) {};

\draw [->] (A) edge[thick, loop above] (A);
\draw [->] (A) edge[thick,bend left] (C);
\draw [->] (C) edge[thick,bend left] (E);
\draw [->] (E) edge[thick,bend left] (F);
\draw (F) edge[thick, loop above] (F);
\draw (G) edge[thick, loop above] (G);
\draw [->] (G) edge[thick,bend left] (F);

\end{tikzpicture}
\caption{}
\label{fig:ex2}
\end{subfigure}

\begin{subfigure}{\textwidth}
\centering
\begin{tikzpicture}

\draw (0,0) edge[very thick] (2,0);
\node (A) at (1,0) {};

\draw (3,0) edge[very thick] (6,0);
\node   (C) at (4.5,0) {};

\draw (7,0) edge[very thick] (9,0);
\draw[-][very thick] (8,-.1) -- (8,0.1);
\node   (F)  at (7.5,0) {};
\node  (G) at (8.5,0) {};

\draw [->] (A) edge[thick, loop above] (A);
\draw [->] (A) edge[thick,bend left] (C);
\draw [->] (C) edge[thick,loop above] (C);
\draw [->] (C) edge[thick,bend left] (F);
\draw (F) edge[thick, loop above] (F);
\draw (G) edge[thick, loop above] (G);
\draw [->] (G) edge[thick,bend left] (F);

\end{tikzpicture}
\caption{}
\label{fig:ex3}
\end{subfigure}

\begin{subfigure}{\textwidth}
\centering
\begin{tikzpicture}

\draw (0,0) edge[very thick] (2,0);
\node (A) at (1,0) {};

\draw (3,0) edge[very thick] (6,0);
\node   (C) at (4.5,0) {};

\draw (7,0) edge[very thick] (9,0);
\node   (F)  at (8,0) {};

\draw [->] (A) edge[thick, loop above] (A);
\draw [->] (A) edge[thick,bend left] (C);
\draw [->] (C) edge[thick,loop above] (C);
\draw [->] (C) edge[thick,bend left] (F);
\draw (F) edge[thick, loop above] (F);
\draw (F) edge[thick, loop above] (F);

\end{tikzpicture}
\caption{}
\label{fig:ex4}
\end{subfigure}

\caption{Grid maps.}
\label{fig:gridMapEx}
\end{figure}

\begin{ex}
In Figure~\ref{fig:gridMapEx}, we have a simple one-dimensional example, showing only the grid elements and edges relevant to the example.  The grids grow increasingly coarser.  Going from (A) to (B), for example, $G_1$ and $G_2$ are merged, as are $G_3$ and $G_4$.

\end{ex}

We define the \emph{inverse} of $\fm$ by $\fm^{-1}(G):=\{H \in \grid : G\in \fm(H)\}$.  We say that a subset $\morses \subset \grid$ is \emph{invariant} if $\morses \subset \fm(\morses)$ and $\morses \subset \fm(^{-1}\morses)$.

We say that $\fm$ is \emph{closed} if $\fm(G)\ne\emptyset$ and $\fm^{-1}(G)\ne\emptyset$  for every $G\in\grid$.  Equivalently, each vertex has at least one edge coming in and one edge going out, that is, there are no stranded vertices.  If $f$ is surjective (in particular, if $f$ is a homeomorphism), then the minimal multivalued map associated to $f$, $\fmg$, is closed (\cite{aacr}*{Prop.~3.2}).  More generally, since there are no bi-infinite orbits through a stranded vertex, we can remove all stranded vertices from $\fm$ without affecting the dynamics.  Thus we can assume that each $\fm$ is closed.

\section{Morse decompositions}\label{sect:MorseDefn}

There are many notions of recurrence for dynamical systems (\cites{Akin,Alongi}), but recurrence is simpler for maps $\fm:\grid\rightrightarrows\grid$.  We say that a grid element $G$ is \emph{recurrent} if there is a nontrivial orbit from $G$ to itself.  Grid elements $G$ and $H$ are equivalent if there are orbits from $G$ to $H$ and from $H$ to $G$.  This gives an equivalence relation on the recurrent set.  The equivalence classes correspond exactly to the nontrivial strongly connected components of the graph $\fm$.

\begin{defn}[\cite{mrGrid}]
A \emph{Morse decomposition} $\morsed$ for a closed multivalued map $\fm:\grid\rightrightarrows\grid$ is a collection of invariant sets $\morses_1,\ldots,\morses_n$, called Morse sets, with a partial order $\succeq$ such that for any bi-infinite orbit $\{G_i\}_{i\in\Z}$, either there exists an $\morses_j$ such that $G_i\in\morses_j$ for all $i$, or there exist integers $i^- < i^+$ and Morse sets $\morses_j \succ \morses_k$ such that $G_i \in \morses_j$ for all $i\leq i^-$ and $G_i \in \morses_k$ for all $i\geq i^+$.
That is, every complete orbit either is contained in one of the Morse sets, or begins in one Morse set and ends in another, lower Morse set.

The equivalence classes of the recurrent set (the strongly connected components of $\fm$) are the Morse sets of the finest Morse decomposition $\morsedg{\grid}$ (\cite{djkklm}*{Thm.~4.1}).  
The partial order $\succeq$ is  given by $\morses_j \succeq \morses_k$ if there is an orbit going from $\morses_j$ to $\morses_k$.  We observe that in this case, if there is an orbit from $\morses_j$ to $\morses_k$ and an orbit from $\morses_k$ to $\morses_l$, then there is an orbit from $\morses_j$ to $\morses_l$, because there are orbits between any two points within a Morse set.  This differs from the case of Morse decompositions for maps, where there can be an orbit from one Morse set to another, and from the second to a third, but not from the first to the third.

\begin{ex}\label{ex:MorseSets}
In Figure~\ref{fig:gridMapEx}(A), the Morse sets are $\{G_2\}$, $\{G_6\}$, and $\{G_7\}$.  In (B), the first has grown to the merged grid element $\{G_1\cup G_2\}$.  In (C), we add the Morse set consisting of the merged $G_3\cup G_4 \cup G_5$.  Finally, in (D) the Morse sets $\{G_6\}$ and $\{G_7\}$ are merged into the single Morse set $\{G_6 \cup G_7\}$.
\end{ex}

We are interested not in  just the Morse sets themselves, but in the dynamics of $\fm$ on each set as well.  On each $\morses_j$, $\fm$ induces the restriction map $\fm_j:\morses_j\rightrightarrows\morses_j$, given by $\fm_j(G)=\fm(G)\cap\morses_j$.  Equivalently, $\fm_j$ is the induced subgraph of $\fm$ on the vertices in $\morses_j$.
We define $\morsea$, the \emph{augmented Morse decomposition}, to be the Morse decomposition along with the induced maps, $$\morsea = (\morsed, \{\fm_j:\morses_j\rightrightarrows\morses_j\}) = (\{\morses_j\},\succeq, \{\fm_j:\morses_j\rightrightarrows\morses_j\}).$$


\end{defn}

Topological dynamics on graphs is discussed more generally in \cite{AW}.  
It is shown in \cites{aacr,OsiC,Akin}, in slightly varying contexts, that as the diameter of the grid goes to 0, the recurrent set of $\fmg$ limits on the chain recurrent set for the underlying map $f$.
For more on the role of Morse decompositions in computational dynamics, see, for example, \cites{BK,ghmkk,KKV,DK,mik}.

\section{Persistence}  \label{sect:persist}

Let $\{\grid_k\}$ be a collection of grids for $X$, partially ordered by refinement.  Our goal is to show that this collection gives persistence of augmented Morse decompositions, in the following sense, which we will make precise.

For a given grid $\grid_k$, with associated minimal multivalued map $\fmgk{k}$, denote by $$\morseak{k} = (\morsedk{k}, \{\fmgk{k}_j: \morsesk{k}_j \rightrightarrows  \morsesk{k}_j\}) = (\{\morsesk{k}_j\}, \succeq^{\grid_k}, \{\fmgk{k}_j: \morsesk{k}_j \rightrightarrows  \morsesk{k}_j\})  $$ the corresponding finest augmented 
Morse decomposition, arising from the strongly connected components of $\fmgk{k}$.  Then for every $\grid_{k_1} < \grid_{k_2}$, there is an appropriately defined morphism $\ha_{\grid_{k_1}}^{\grid_{k_2}}: \morseak{k_1} \to \morseak{k_2}$ that respects the Morse sets, the partial order, and the induced maps.  If $\grid_{k_1} < \grid_{k_2} < \grid_{k_3}$, then $\ha_{\grid_{k_2}}^{\grid_{k_3}} \circ \ha_{\grid_{k_1}}^{\grid_{k_2}} = \ha_{\grid_{k_1}}^{\grid_{k_3}}$.

We will proceed as follows.  It is straightforward to show that grid refinement gives persistence of the minimal multivalued maps associated to the grids.  It then follows that the multivalued map persistence induces persistence of the augmented Morse decompositions.  (There are somewhat analogous results, using covers instead of grids, in \cite{Osi}.)

\begin{defn}
Let $\grid$ and $\grid'$ be grids, and $\fm:\grid\rightrightarrows\grid$ and $\fm':\grid'\rightrightarrows\grid'$ be multivalued maps.  A (single-valued) map $h:\grid\to\grid'$ is a \emph{grid map morphism} if $h(\fm(G)) \subset \fm'(h(G))$ for all $G\in\grid$.  Equivalently, $h$ is a directed graph homomorphism from $\fm$ to $\fm'$.
\end{defn}

\begin{prop}\label{prop:gridmappersist}
Let $f:X\to X$ be a continuous map of a compact metric space, and let $\{\grid_k\}$ be a collection of grids for $X$, partially ordered by refinement.  For each $k$, let $\fmgk{k}:\grid_k\rightrightarrows \grid_k$ be the minimal multivalued map associated to $f$ on $\grid_k$.
Then for every $\grid_{k_1} < \grid_{k_2}$, there is a morphism $h_{k1}^{k2}:\grid_{k_1} \to \grid_{k_2}$, and if $\grid_{k_1} < \grid_{k_2} < \grid_{k_3}$, then $h_{k_2}^{k_3}\circ h_{k_1}^{k_2} = h_{k_1}^{k_3}$.
\end{prop}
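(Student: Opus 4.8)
The plan is to define the morphism by containment. Since $\grid_{k_1}$ refines $\grid_{k_2}$, every element $G\in\grid_{k_1}$ sits inside some element of $\grid_{k_2}$, and I would set $h_{k_1}^{k_2}(G)$ to be that containing element. The first thing to establish is that this containing element is \emph{unique}, so that $h_{k_1}^{k_2}$ is a canonical single-valued map rather than one depending on arbitrary choices. This is where the grid axioms do the real work: since each $G\in\grid_{k_1}$ is nonempty, axiom (2) forces $\Int(G)\ne\emptyset$, and if $G\subset G'$ with $G'\in\grid_{k_2}$ then the openness of $\Int(G)$ gives $\Int(G)\subset\Int(G')$. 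Were $G$ contained in two distinct elements $G',G''\in\grid_{k_2}$, we would obtain $\emptyset\ne\Int(G)\subset\Int(G')\cap\Int(G'')$, contradicting the disjointness of interiors in axiom (3). I expect this uniqueness to be the main point of the proof; once it is in hand, everything else is bookkeeping.

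Next I would check the morphism property $h_{k_1}^{k_2}(\fmgk{k_1}(G))\subset\fmgk{k_2}(h_{k_1}^{k_2}(G))$. Writing $h=h_{k_1}^{k_2}$, take $H\in\fmgk{k_1}(G)$, so that $H\cap f(G)\ne\emptyset$ by the definition of the minimal multivalued map. Since $G\subset h(G)$ we have $f(G)\subset f(h(G))$, and since $H\subset h(H)$ we get $h(H)\cap f(h(G))\supset H\cap f(G)\ne\emptyset$; hence $h(H)\in\fmgk{k_2}(h(G))$, as required. This step is immediate from the monotonicity of the image under inclusion and needs no further input (in particular it does not even use continuity of $f$).

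Finally I would verify functoriality. Refinement is transitive, so $\grid_{k_1}<\grid_{k_2}<\grid_{k_3}$ gives $\grid_{k_1}<\grid_{k_3}$ and all three maps are defined. For $G\in\grid_{k_1}$, set $G'=h_{k_1}^{k_2}(G)$ and $G''=h_{k_2}^{k_3}(G')$, so that $G\subset G'\subset G''$. Then $G''$ is an element of $\grid_{k_3}$ containing $G$, and by the uniqueness established above it must equal $h_{k_1}^{k_3}(G)$; thus $h_{k_2}^{k_3}\circ h_{k_1}^{k_2}=h_{k_1}^{k_3}$. The only nontrivial ingredient here is again the uniqueness of the containing element, which guarantees that the canonical choices at successive resolutions are automatically compatible.
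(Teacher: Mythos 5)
Your proof is correct and follows essentially the same route as the paper's: define $h$ by sending each $G\in\grid_{k_1}$ to the element of $\grid_{k_2}$ containing it, and verify the morphism property from $G\subset h(G)$ and $H\subset h(H)$. Your explicit uniqueness argument (nonempty interiors from axiom (2) plus disjointness of interiors from axiom (3)) is a detail the paper leaves implicit; it is exactly what makes $h$ canonical and the functoriality identity automatic, so including it is a mild improvement rather than a divergence.
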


\begin{proof}
Take grids $\grid < \grid'$.  Since $\grid$ is a refinement of $\grid'$, by definition for any $G\in\grid$ there is a $G'\in\grid'$ such that $G\subset G'$; define $h$ by $h(G) := G'$.  If $H\in\fm(G)$, then $H\cap f(G) \ne \emptyset$.  Since $G\subset h(G)$ and $H \subset h(H)$, we have $h(H) \cap f(h(G)) \ne \emptyset$, so $h(H)\in\fm'(h(G))$.  Since $H$ was an arbitrary element of $\fm(G)$, we have $h(\fm(G))\subset\fm'(h(G))$.  That $h_{k_2}^{k_3}\circ h_{k_1}^{k_2} = h_{k_1}^{k_3}$ follows from the definition; the composition of inclusions is inclusion.
\end{proof}

\begin{ex}
In Figure~\ref{fig:gridMapEx}, we see that at as the grid goes coarser, the inclusion of a grid element into a larger, merged grid element induces persistence of grid maps.
\end{ex}

\begin{defn}
Let $\grid$ and $\grid'$ be grids, and $\fm:\grid\rightrightarrows\grid$ and $\fm':\grid'\rightrightarrows\grid'$  multivalued maps.  Let $\morsedg\grid = (\{\morsesg{\grid}_j\},\succeq^{\grid})$ and $\morsedg{\grid'}= (\{\morsesg{\grid'}_j\},\succeq^{\grid'})$ be the corresponding finest Morse decompositions, arising from the strongly connected components of $\fm$ and $\fm'$.  A map $\hm:\{\morsesg{\grid}_j\} \to \{\morsesg{\grid'}_j\}$ is a \emph{Morse decomposition morphism} if $\hm(\morsesg{\grid}_j) \succeq^{\grid'} \hm(\morsesg{\grid}_k)$ whenever $\morsesg{\grid}_j \succeq^{\grid} \morsesg{\grid}_k$.  Let $\morseag{\grid} = (\morsedg{\grid}, \{\fmgg{\grid}_j:\morsesg{\grid}_j \rightrightarrows \morsesg{\grid}_j\})$ and $\morseag{\grid'} = (\morsedg{\grid'}, \{\fmgg{\grid'}_j:\morsesg{\grid'}_j \rightrightarrows \morsesg{\grid'}_j\})$ be the corresponding augmented Morse decompositions.  An \emph{augmented Morse decomposition morphism} $\ha:\morseag{\grid}\to \morseag{\grid'}$ is a pair $(\hm,\{h_j\})$, where $\hm$ is a morphism between $\morsedg{\grid}$ and $\morsedg{\grid'}$, and for each $\morsesg{\grid}_j$, $h_j: \morsesg{\grid}_j\to\hm(\morsesg{\grid}_j)$ is a grid map morphism.
\end{defn}

\begin{lemma}\label{lem:augMorsePersist}
Let $\{\grid_k\}$ be a collection of grids, partially ordered by refinement, with maps $\fmgk{k}:\grid_k\rightrightarrows \grid_k$.  Then persistence for grid maps induces persistence for the corresponding finest augmented Morse decompositions.  That is, if for every $\grid_{k_1} < \grid_{k_2}$, there is a morphism $h_{k1}^{k2}:\grid_{k_1} \to \grid_{k_2}$ such that  $h_{k_2}^{k_3}\circ h_{k_1}^{k_2} = h_{k_1}^{k_3}$ for any $\grid_{k_1} < \grid_{k_2} < \grid_{k_3}$, then for every $\grid_{k_1} < \grid_{k_2}$, there is a morphism $\ha_{\grid_{k_1}}^{\grid_{k_2}}: \morseak{k_1} \to \morseak{k_2}$ such that $\ha_{\grid_{k_2}}^{\grid_{k_3}} \circ \ha_{\grid_{k_1}}^{\grid_{k_2}} = \ha_{\grid_{k_1}}^{\grid_{k_3}}$ for any $\grid_{k_1} < \grid_{k_2} < \grid_{k_3}$.
\end{lemma}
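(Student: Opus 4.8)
The plan is to upgrade each individual grid map morphism to an augmented Morse decomposition morphism, and then to check that this assignment is functorial, so that the hypothesis $h_{k_2}^{k_3}\circ h_{k_1}^{k_2}=h_{k_1}^{k_3}$ passes to the conclusion. Accordingly, I fix grids $\grid<\grid'$ with maps $\fm,\fm'$ and a grid map morphism $h:\grid\to\grid'$ (as produced by Proposition~\ref{prop:gridmappersist}), and I construct $\ha=(\hm,\{h_j\})$ from $h$ alone.

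The one fact doing all the work is that a grid map morphism is a directed graph homomorphism, and graph homomorphisms send orbits to orbits of the same length: if $G_0\to G_1\to\cdots\to G_m$ is an orbit in $\fm$, then $h(G_0)\to h(G_1)\to\cdots\to h(G_m)$ is an orbit in $\fm'$. First I would use this to see that $h$ preserves recurrence and mutual reachability: a nontrivial closed orbit at $G$ maps to a nontrivial closed orbit at $h(G)$, so $h(G)$ is recurrent; and if $G$ and $H$ are equivalent (mutually reachable) then so are $h(G)$ and $h(H)$. Consequently $h$ maps every element of a fixed Morse set $\morsesg{\grid}_j$ into a single nontrivial strongly connected component of $\fm'$, i.e.\ into a single Morse set of $\fm'$, and I define $\hm(\morsesg{\grid}_j)$ to be that Morse set. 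This is where one must be slightly careful: even when $h$ collapses several elements of $\morsesg{\grid}_j$ to one element $G'$, the image is still recurrent, because the closed walk witnessing equivalence still has length at least one after applying $h$.

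With $\hm$ in hand, order preservation is immediate from the same orbit-to-orbit principle: if $\morsesg{\grid}_j\succeq^{\grid}\morsesg{\grid}_k$, there is an orbit in $\fm$ from $\morsesg{\grid}_j$ to $\morsesg{\grid}_k$, whose image is an orbit in $\fm'$ from $\hm(\morsesg{\grid}_j)$ to $\hm(\morsesg{\grid}_k)$, so $\hm(\morsesg{\grid}_j)\succeq^{\grid'}\hm(\morsesg{\grid}_k)$ (the case $\hm(\morsesg{\grid}_j)=\hm(\morsesg{\grid}_k)$, which occurs exactly when two Morse sets merge, is covered by reflexivity of $\succeq^{\grid'}$). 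For the augmentation I set $h_j$ to be the restriction $h|_{\morsesg{\grid}_j}:\morsesg{\grid}_j\to\hm(\morsesg{\grid}_j)$ and check that it is a grid map morphism for the induced subgraphs: if $H\in\fmgg{\grid}_j(G)=\fm(G)\cap\morsesg{\grid}_j$, then $h(H)\in\fm'(h(G))$ because $h$ is a grid map morphism, and $h(H)\in\hm(\morsesg{\grid}_j)$ by construction, so $h(H)$ lies in $\fm'(h(G))\cap\hm(\morsesg{\grid}_j)$, which is precisely the induced map on $\hm(\morsesg{\grid}_j)$ applied to $h(G)$. This yields $\ha=(\hm,\{h_j\}):\morseag{\grid}\to\morseag{\grid'}$.

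Finally, functoriality reduces to that of the underlying grid maps. Because $\hm$ and each $h_j$ are defined directly from $h$ by ``take the image and read off which Morse set it lands in'' and ``restrict,'' the augmented morphism built from $h_{k_2}^{k_3}\circ h_{k_1}^{k_2}$ agrees componentwise with the composite of the two augmented morphisms; since $h_{k_2}^{k_3}\circ h_{k_1}^{k_2}=h_{k_1}^{k_3}$, we obtain $\ha_{\grid_{k_2}}^{\grid_{k_3}}\circ\ha_{\grid_{k_1}}^{\grid_{k_2}}=\ha_{\grid_{k_1}}^{\grid_{k_3}}$. I expect the only real subtlety --- the \emph{main obstacle} --- to be the well-definedness of $\hm$ under collapsing: one must confirm that a Morse set cannot be sent to a transient (non-recurrent) component, and that two equivalent elements cannot be separated into different components. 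Both follow from orbit preservation, but they are the crux and deserve an explicit argument rather than being taken for granted.
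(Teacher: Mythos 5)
Your proposal is correct and follows essentially the same route as the paper: both arguments rest on the single observation that a grid map morphism, being a directed graph homomorphism, sends orbits to orbits, which preserves recurrence and mutual reachability, yields a well-defined $\hm$ on strongly connected components, gives order preservation, and lets $h_j$ be the restriction of $h$, with functoriality inherited from the grid maps. Your added checks (that the image of a Morse set lands in a single nontrivial strongly connected component, and that the restricted $h_j$ respects the induced subgraph maps) are worthwhile elaborations of steps the paper treats more briefly, but they do not constitute a different argument.
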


\begin{proof}
This is essentially the fact that directed graph homomorphisms preserve strong connectedness.  Take grids $\grid < \grid'$ and maps $\fm:\grid\rightrightarrows\grid$ and $\fm':\grid'\rightrightarrows\grid'$, with morphism $h:\grid\to\grid'$.  Then  any $\fm$-orbit $(G_0=G,G_1,\ldots,G_n=H)$ from $G$ to $H \in \grid$ gives an $\fm'$-orbit $(h(G_0)=h(G),h(G_1),\ldots,h(G_n)=h(H))$ from $h(G)$ to $h(H) \in \grid'$.  Thus if $G$ is recurrent for $\fm$, then $h(G)$ is recurrent for $\fm'$, and
 if $G$ and $H$ are in the same Morse set $\morsesg{\grid}_j$, then $h(G)$ and $h(H)$ are in the same Morse set $\morsesg{\grid'}_{j'}$, 
and we can define $\hm$ by $\hm(\morsesg{\grid}_j) := \morsesg{\grid'}_{j'}$, where $h(G) \in \morsesg{\grid'}_k$ for all $G\in\morsesg{\grid}_j$.  Furthermore, if there is an $\fm$-orbit from 
$\morsesg{\grid}_j $ to $\morsesg{\grid}_k$, then there is an $\fm'$-orbit from $\hm(\morsesg{\grid}_j)$ to $\hm(\morsesg{\grid}_k)$, and so $\hm(\morsesg{\grid}_j) \succeq^{\grid'} \hm(\morsesg{\grid}_k)$ 
whenever $\morsesg{\grid}_j \succeq^{\grid} \morsesg{\grid}_k$.  Thus $\hm$ is a Morse decomposition morphism.  

For each $\morsesg{\grid}_j$, we can now define $h_j: \morsesg{\grid}_j\to\hm(\morsesg{\grid}_j)$ as the restriction of $h$ to $\morsesg{\grid}_j$.

The identity $\ha_{\grid_{k_2}}^{\grid_{k_3}} \circ \ha_{\grid_{k_1}}^{\grid_{k_2}} = \ha_{\grid_{k_1}}^{\grid_{k_3}}$ for any $\grid_{k_1} < \grid_{k_2} < \grid_{k_3}$ follows from the definition.
\end{proof}

As an immediate corollary, we have persistence over grid refinement for the finest Morse decompositions.

\begin{thm}
Let $f:X\to X$ be a continuous map of a compact metric space, and let $\{\grid_k\}$ be a collection of grids for $X$, partially ordered by refinement.  For each $k$, let $\fmgk{k}:\grid_k\rightrightarrows \grid_k$ be the minimal multivalued map associated to $f$ on $\grid_k$.  Then for every $\grid_{k_1} < \grid_{k_2}$, there is a morphism $\ha_{\grid_{k_1}}^{\grid_{k_2}}: \morseak{k_1} \to \morseak{k_2}$ such that $\ha_{\grid_{k_2}}^{\grid_{k_3}} \circ \ha_{\grid_{k_1}}^{\grid_{k_2}} = \ha_{\grid_{k_1}}^{\grid_{k_3}}$ for any $\grid_{k_1} < \grid_{k_2} < \grid_{k_3}$.
\end{thm}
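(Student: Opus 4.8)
The plan is to obtain this theorem as a direct concatenation of the two preceding results, since its statement is exactly the composite of their conclusions. First I would apply Proposition~\ref{prop:gridmappersist} to the continuous map $f$ and the refinement-ordered family $\{\grid_k\}$. This yields, for every $\grid_{k_1} < \grid_{k_2}$, a grid map morphism $h_{k_1}^{k_2}:\grid_{k_1}\to\grid_{k_2}$ between the minimal multivalued maps $\fmgk{k_1}$ and $\fmgk{k_2}$, together with the functoriality identity $h_{k_2}^{k_3}\circ h_{k_1}^{k_2} = h_{k_1}^{k_3}$ whenever $\grid_{k_1}<\grid_{k_2}<\grid_{k_3}$.

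Next I would feed this family of grid map morphisms into Lemma~\ref{lem:augMorsePersist}, whose hypothesis is precisely the existence of such a compositional family of morphisms $h_{k_1}^{k_2}$. Its conclusion is verbatim the assertion of the theorem: for every $\grid_{k_1}<\grid_{k_2}$ there is an augmented Morse decomposition morphism $\ha_{\grid_{k_1}}^{\grid_{k_2}}:\morseak{k_1}\to\morseak{k_2}$, and these satisfy $\ha_{\grid_{k_2}}^{\grid_{k_3}} \circ \ha_{\grid_{k_1}}^{\grid_{k_2}} = \ha_{\grid_{k_1}}^{\grid_{k_3}}$ for any chain $\grid_{k_1} < \grid_{k_2} < \grid_{k_3}$. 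The argument is thus a single invocation in each direction.

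Because this is an immediate corollary, I expect no genuine obstacle to remain at this stage; all the substance already lives in the Proposition and the Lemma. The one thing worth confirming is that the hypothesis of Lemma~\ref{lem:augMorsePersist} matches the output of Proposition~\ref{prop:gridmappersist} exactly (same index set $\{\grid_k\}$, same refinement order, same maps $\fmgk{k}$, and the same compositional identity), so that the two results chain with no gap. If anything required care, it was dispatched inside the Lemma: a directed graph homomorphism carries orbits to orbits, hence preserves strong connectivity and the induced partial order, so that $\hm$ is well defined on Morse sets and the restrictions $h_j$ supply the component maps. Here I would simply cite that fact rather than reprove it.
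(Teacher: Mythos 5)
Your proposal matches the paper exactly: the theorem is stated there as an immediate corollary obtained by composing Proposition~\ref{prop:gridmappersist} (which supplies the compositional family of grid map morphisms $h_{k_1}^{k_2}$) with Lemma~\ref{lem:augMorsePersist} (which converts such a family into augmented Morse decomposition morphisms $\ha_{\grid_{k_1}}^{\grid_{k_2}}$ with the same functoriality). There is no gap; the substance lives in the two cited results, just as you say.
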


\begin{ex}
In Figure~\ref{fig:gridMapEx} and Example~\ref{ex:MorseSets}, we see that at as the grid goes coarser, the inclusion of a grid element into a larger, merged grid element induces persistence of augmented Morse decompositions.
\end{ex}

\begin{remark}
It may seem more natural to have morphisms going the other way, from the objects corresponding to the coarser grid to those corresponding to the finer grid, in particular so that we can take finer and finer grids.  One obvious way to achieve this is to take cohomology instead when we look at homology in the following sections.  

More generally, 
there are essentially two reasons that we do not get grid map morphisms $\grid'\to\grid$ for $\grid < \grid'$.  First, a $G'\in\grid'$ can correspond to multiple elements of $\grid$, and second, an edge in $\grid'$ may not correspond to an edge in $\grid$.  We can address both of these issues by looking at a kind of dual graph to $\fmg$, $(\fmg)^*$.  The vertices of  $(\fmg)^*$ are the subsets of the vertices of $\fmg$, and there is an edge from $\{G_1,\ldots,G_m\}$ to $\{H_1,\ldots,H_n\}$ if for all $i$ and $j$ there is no edge $G_i\to H_j$ in $\fmg$.  Then there is a directed graph homomorphism  $ (\fmgg{\grid'})^* \to (\fmg)^*$ for $\grid < \grid'$, which gives persistence, as discussed in this section.

A disadvantage of this approach is that the vertex set is very large, but we can take advantages of symmetries to greatly reduce the computational complexity.
The details of these dual graphs and the relation of their properties, such as Morse decompositions, to the properties of $\fmg$ will be the subject of future work.
\end{remark}

\section{Global persistence}\label{sect:globalPers}

\subsection{Morse graphs}

For a given grid $\grid$, we can define the associated Morse graph $\morsegg\grid$ by starting with the graph $\fmg$ and collapsing each Morse set $\morsesg\grid_i$ to a single vertex.  (This is essentially the condensation of $\fmg$, or the quotient graph by the equivalence relation on the recurrent set.)  This gives a directed acyclic graph that describes the gradient-like (non-recurrent) behavior of the system (see \cite{akkmop}).  More precisely, we have the following definition.

\begin{defn}
The Morse graph $\morsegg\grid$ for the grid $\grid$ has the vertex set $\{v_i = \morsesg\grid_i\}$, with an edge from $v_i$ to $v_j$ if $\morsesg\grid_i\succ\morsesg\grid_j$, that is, if there is an orbit from $\morsesg\grid_i$ to $\morsesg\grid_j$.
\end{defn}

The vertices inherit the partial order $\succeq$ on the Morse sets.  We write $v \incomp v'$ if the two vertices are incomparable, that is, $v \nsucceq v'$ and $v' \nsucceq v$.  Observe that a Morse graph has no loops and no multiple edges.  In addition, the edges are transitive:  if there are edges $v_i\to v_j$ and $v_j\to v_k$, then there is an edge $v_i\to v_k$.

  It is clear that we have persistence over grid refinement of Morse graphs, since grid map morphisms preserve orbits.  To be precise, we have the following proposition.

\begin{prop}\label{prop:MorseGraphPersist}
Let $f:X\to X$ be a continuous map of a compact metric space, and let $\{\grid_k\}$ be a collection of grids for $X$, partially ordered by refinement.  For each $k$, let $\fmgk{k}:\grid_k\rightrightarrows \grid_k$ be the minimal multivalued map associated to $f$ on $\grid_k$.  Then for every $\grid_{k_1} < \grid_{k_2}$, there is a directed graph homomorphism $\hmg_{\grid_{k_1}}^{\grid_{k_2}}: \morsegk{k_1} \to \morsegk{k_2}$ such that $\hmg_{\grid_{k_2}}^{\grid_{k_3}} \circ \hmg_{\grid_{k_1}}^{\grid_{k_2}} = \hmg_{\grid_{k_1}}^{\grid_{k_3}}$ for any $\grid_{k_1} < \grid_{k_2} < \grid_{k_3}$.
\end{prop}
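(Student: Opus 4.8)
The plan is to build the directed graph homomorphism $\hmg_{\grid_{k_1}}^{\grid_{k_2}}$ on Morse graphs directly from the Morse decomposition morphism $\ha_{\grid_{k_1}}^{\grid_{k_2}}$ already produced by Lemma~\ref{lem:augMorsePersist}. Recall that by that lemma we have a Morse decomposition morphism $\hm = \hm_{\grid_{k_1}}^{\grid_{k_2}}$ sending each Morse set $\morsesk{k_1}_i$ to a Morse set $\morsesk{k_2}_{i'}$, and respecting the partial order: $\hm(\morsesk{k_1}_i) \succeq^{\grid_{k_2}} \hm(\morsesk{k_1}_j)$ whenever $\morsesk{k_1}_i \succeq^{\grid_{k_1}} \morsesk{k_1}_j$. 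Since the vertices of the Morse graph $\morsegk{k}$ are exactly the Morse sets $v_i = \morsesk{k}_i$, I would simply define $\hmg_{\grid_{k_1}}^{\grid_{k_2}}$ on vertices by $\hmg(v_i) := \hm(\morsesk{k_1}_i)$.

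First I would verify that this vertex map is a directed graph homomorphism. By definition of the Morse graph, there is an edge $v_i \to v_j$ precisely when $\morsesk{k_1}_i \succ^{\grid_{k_1}} \morsesk{k_1}_j$, i.e. the two are distinct and comparable. I need to check that whenever $v_i \to v_j$ is an edge, either $\hmg(v_i) \to \hmg(v_j)$ is an edge in $\morsegk{k_2}$ or $\hmg(v_i) = \hmg(v_j)$. From the order-preservation of $\hm$ we get $\hm(\morsesk{k_1}_i) \succeq^{\grid_{k_2}} \hm(\morsesk{k_1}_j)$; this is a non-strict inequality, so the two images may coincide. This is exactly the subtlety to flag: distinct Morse sets upstairs can merge into a single Morse set downstairs (as in the passage from (C) to (D) in Figure~\ref{fig:gridMapEx}), collapsing an edge to a loop, which a Morse graph does not contain. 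So I would adopt the standard convention for graph homomorphisms that an edge is permitted to map to a single vertex (or, equivalently, state that $\hmg$ sends edges to edges or to vertices), which is the natural notion here and is consistent with the condensation being a quotient. The order-preservation then gives precisely this: if $v_i \to v_j$ then $\hmg(v_i) \succeq^{\grid_{k_2}} \hmg(v_j)$, hence either $\hmg(v_i) \to \hmg(v_j)$ is an edge or $\hmg(v_i) = \hmg(v_j)$.

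The functoriality identity $\hmg_{\grid_{k_2}}^{\grid_{k_3}} \circ \hmg_{\grid_{k_1}}^{\grid_{k_2}} = \hmg_{\grid_{k_1}}^{\grid_{k_3}}$ follows immediately from the corresponding identity for the underlying Morse decomposition morphisms $\hm$, which in turn comes from the cocycle identity $h_{k_2}^{k_3} \circ h_{k_1}^{k_2} = h_{k_1}^{k_3}$ for the grid maps in Proposition~\ref{prop:gridmappersist}. Concretely, both sides send $v_i$ to the Morse set downstairs containing $h_{k_1}^{k_3}(G)$ for any $G \in \morsesk{k_1}_i$, and these agree because the grid maps compose on the nose.

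The main obstacle is conceptual rather than computational: it is the possible merging of Morse sets, which means $\hmg$ need not be injective on vertices and an edge can degenerate. The cleanest resolution is to fix the convention for a directed graph homomorphism to allow edge collapse, and then the whole statement is a transcription of Lemma~\ref{lem:augMorsePersist} onto the condensation. I would therefore keep the proof short, noting that it reduces to the order-preservation already established, with the only point of care being the edge-collapse convention forced by the merging of Morse sets at coarser resolution.
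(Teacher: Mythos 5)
Your proposal is correct and follows essentially the same route as the paper, which offers no separate proof beyond the preceding remark that grid map morphisms preserve orbits and hence induce order-preserving maps on Morse sets (Lemma~\ref{lem:augMorsePersist}), exactly the reduction you make. The edge-collapse convention you flag is indeed the one the paper implicitly adopts (see the parenthetical ``without merging'' in the proof of Proposition~\ref{prop:MorseGraphChanges} and the merger of $v_2$ and $v_3$ in Figure~\ref{fig:MorseGraph}), so your only added point of care is consistent with, and slightly more explicit than, the paper's treatment.
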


\begin{figure}

\begin{subfigure}{\textwidth}
\centering
\begin{tikzpicture}

\node[draw,circle] (v1) at (0,0) {$v_1$};
\node[draw,circle] (v2) at (2,0) {$v_2$};
\node[draw,circle] (v3) at (4,0) {$v_3$};
\draw [-Latex, thick] (v3) -- (v2);

\end{tikzpicture}
\caption{}
\label{fig:exM1}
\end{subfigure}

\begin{subfigure}{\textwidth}
\centering
\begin{tikzpicture}

\node[draw,circle] (v1) at (0,0) {$v_1'$};
\node[draw,circle] (v2) at (2,0) {$v_2$};
\node[draw,circle] (v3) at (4,0) {$v_3$};
\draw [-Latex, thick] (v3) -- (v2);
\draw [-Latex, thick] (v1) -- (v2);

\end{tikzpicture}
\caption{}
\label{fig:exM2}
\end{subfigure}

\begin{subfigure}{\textwidth}
\centering
\begin{tikzpicture}

\node[draw,circle] (v1) at (0,0) {$v_1'$};
\node[draw,circle] (v4) at (2,0) {$v_4$};
\node[draw,circle] (v2) at (4,0) {$v_2$};
\node[draw,circle] (v3) at (6,0) {$v_3$};
\draw [-Latex, thick] (v3) -- (v2);
\draw [-Latex, thick] (v1) -- (v4);
\draw [-Latex, thick] (v4) -- (v2);
\draw (v1) edge[-Latex, thick, bend left]  (v2);

\end{tikzpicture}
\caption{}
\label{fig:exM3}
\end{subfigure}

\begin{subfigure}{\textwidth}
\centering
\begin{tikzpicture}

\node[draw,circle] (v1) at (0,0) {$v_1'$};
\node[draw,circle] (v4) at (2,0) {$v_4$};
\node[draw,circle] (v2) at (4,0) {$v_2'$};
\draw [-Latex, thick] (v1) -- (v4);
\draw [-Latex, thick] (v4) -- (v2);
\draw (v1) edge[-Latex, thick, bend left]  (v2);

\end{tikzpicture}
\caption{}
\label{fig:exM4}
\end{subfigure}

\caption{Persistence of Morse graphs}
\label{fig:MorseGraph}
\end{figure}
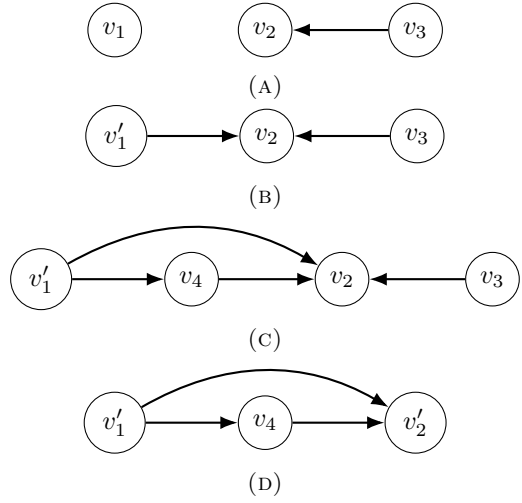

\begin{ex}\label{ex:MorseGraphs}
In  Figure~\ref{fig:MorseGraph}, we see the Morse graphs from the example in Figure~\ref{fig:gridMapEx} and Example~\ref{ex:MorseSets}.
\end{ex}

%
%
%

In \cites{akkmop,BGHKMOP,AGGKMO} the authors discuss Morse-Conley graphs, stronger versions of Morse graphs containing information about a range of parameters as well as about the dynamics on individual Morse sets, in the form of the Conley index, and they consider local refinements of the grid in order to obtain more detailed information.

\subsection{Merge trees for Morse graph vertices}

(A general reference for merge trees and persistent homology is \cite{EH}.)
Example~\ref{ex:MorseGraphs} shows that passing to a coarser grid can affect the Morse graph by adding an edge, adding a vertex, or merging  vertices.  (When vertices $\morses_i$ and $\morses_j$ are merged, that is, when $\hm(\morses_i)=\hm(\morses_j)$,  the affected edges are also merged; any edges from the set $\hm^{-1}(\hm(\morses_i))$ to a given vertex are merged into one, as are any edges from a given vertex to $\hm^{-1}(\hm(\morses_i))$.)
It is easy to see that these are the only possible changes.

\begin{prop} \label{prop:MorseGraphChanges}
Let $\grid < \grid'$ be two grids.  Then $\morsegg{\grid'}$ can be obtained from $\morsegg{\grid}$ by a series of the following operations:
\begin{enumerate}
\item Adding a vertex.
\item Adding an edge.
\item Merging vertices.
\end{enumerate}
\end{prop}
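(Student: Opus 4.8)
The plan is to unpack the directed graph homomorphism $\hmg = \hmg_{\grid}^{\grid'}\colon \morsegg{\grid}\to\morsegg{\grid'}$ supplied by Proposition~\ref{prop:MorseGraphPersist}, and to show that its only effect is to identify vertices, to introduce new vertices outside its image, and to introduce new edges---never to delete a vertex or an edge. Since $\hmg$ is induced by the Morse decomposition morphism $\hm$, each vertex $\morsesg{\grid}_i$ of $\morsegg{\grid}$ has a well-defined image $\hmg(\morsesg{\grid}_i) = \hm(\morsesg{\grid}_i)$, so no vertex is lost; and because $\hm$ is order-preserving (Lemma~\ref{lem:augMorsePersist}), every edge $\morsesg{\grid}_i\to\morsesg{\grid}_j$ of $\morsegg{\grid}$ (that is, $\morsesg{\grid}_i\succ\morsesg{\grid}_j$) maps either to an edge $\hmg(\morsesg{\grid}_i)\to\hmg(\morsesg{\grid}_j)$ of $\morsegg{\grid'}$ or, when $\hmg(\morsesg{\grid}_i)=\hmg(\morsesg{\grid}_j)$, to a loop that is discarded. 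Thus no edge is lost either.

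Concretely, I would build $\morsegg{\grid'}$ from $\morsegg{\grid}$ in three stages. First, \emph{merge}: for each fiber $\hmg^{-1}(v)$ containing more than one vertex, merge those vertices, consolidating parallel edges into single edges and deleting the loops created by edges internal to a fiber, exactly as described in the paragraph preceding the statement. Call the result $\Gamma_1$; its vertex set is in bijection with the image $\hmg(\{\morsesg{\grid}_i\})\subseteq V(\morsegg{\grid'})$, and by the order-preservation remark above its edge set is a subset of the edges of $\morsegg{\grid'}$ spanned by those vertices. Second, \emph{add vertices}: adjoin the vertices of $\morsegg{\grid'}$ not lying in the image of $\hmg$, obtaining a graph $\Gamma_2$ with the correct vertex set $V(\morsegg{\grid'})$ and with edge set still contained in $E(\morsegg{\grid'})$. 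Third, \emph{add edges}: adjoin the remaining edges of $\morsegg{\grid'}$, which yields $\morsegg{\grid'}$ exactly.

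The one point requiring care---and the main obstacle---is verifying that the merge stage produces a subgraph of $\morsegg{\grid'}$ rather than something with spurious edges or with edges that would later have to be removed. This amounts to checking two things: that a merged edge $\hmg(\morsesg{\grid}_i)\to\hmg(\morsesg{\grid}_j)$ with distinct endpoints is genuinely an edge of $\morsegg{\grid'}$, which is the forward direction of order-preservation; and that edges internal to a single fiber become loops and are correctly discarded, so that $\Gamma_1$ inherits the no-loops, no-multiple-edges structure of a Morse graph. Neither step introduces an edge absent from $\morsegg{\grid'}$, so the second and third stages only ever \emph{add}. All of the dynamical content is already contained in Proposition~\ref{prop:MorseGraphPersist} (via Lemma~\ref{lem:augMorsePersist}); the proposition is essentially the observation that a graph homomorphism of this type factors as a sequence of vertex identifications followed by vertex and edge insertions.
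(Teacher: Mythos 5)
Your proposal is correct and follows essentially the same route as the paper: both arguments rest on the graph homomorphism of Proposition~\ref{prop:MorseGraphPersist} to rule out losing vertices or edges, and then observe that the remaining discrepancies can only be mergers and additions. Your version is simply a more explicit, staged form of the paper's two-sentence argument, with the useful extra verification that the merged graph embeds as a subgraph of $\morsegg{\grid'}$.
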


\begin{proof}
As we see in  Figure~\ref{fig:gridMapEx}  and Example~\ref{ex:MorseGraphs}, passing from $\fmg$ to $\fmgg{\grid'}$ can create new orbits, and thus add a vertex by making a set recurrent, add an edge by creating an orbit from one Morse set to another, or merge vertices by creating orbits between Morse sets or merging the grid elements in them.
The fact that we cannot lose vertices or edges (without merging) follows from Proposition~\ref{prop:MorseGraphPersist}; every vertex is mapped to a vertex and every edge to an edge.
\end{proof}

\begin{figure}

\begin{tikzpicture}[
dot/.style = {circle, fill, minimum size=5pt,
              inner sep=0pt, outer sep=0pt}
              ]

\node (gd) at (0,0) {$\grid_{\tt D}$};
\node[dot] (20) at (2,0) {};
\node[dot] (30) at (3,0) {};
\node[dot] (50) at (5,0) {};

\node (gc) at (0,1) {$\grid_{\tt C}$};
\node[dot] (21) at (2,1) {};
\node[dot] (31) at (3,1) {};
\node[dot] (41) at (4,1) {};
\node[dot] (61) at (6,1) {};

\node (gb) at (0,2) {$\grid_{\tt B}$};
\node[dot] (22) at (2,2) {};
\node[dot] (42) at (4,2) {};
\node[dot] (62) at (6,2) {};

\node (ga) at (0,3) {$\grid_{\tt A}$};
\node[dot] (23) at (2,3) {};
\node[dot] (43) at (4,3) {};
\node[dot] (63) at (6,3) {};

\draw (20) edge[thick] (23);
\draw (30) edge[thick] (31);
\draw (50) edge[thick] (41);
\draw (50) edge[thick] (61);
\draw (41) edge[thick] (43);
\draw (61) edge[thick] (63);

\end{tikzpicture}

\caption{Merge tree for Morse graphs}
\label{fig:mergeTree}
\end{figure}
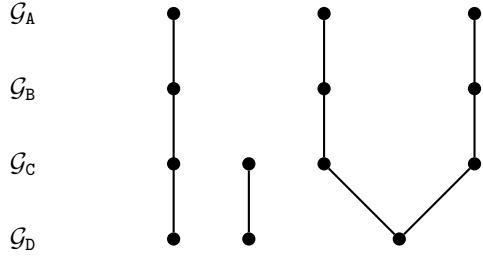

The simplest way of keeping track of the changes in the Morse graphs is by looking only at the vertices, and ignoring the edges.  For an ordered set of grids $\{\grid_k\}_{k=1}^N$, we create a merge tree as follows.  For each $\grid_k$, we draw the vertices of $\morsegk{k_1}$ as points.  We draw an edge from the point $\morses_i^{\grid_k}$ in level $k$ to the point $\morses_j^{\grid_{k+1}}$ in level $k+1$ if $\hm(\morses_i^{\grid_k}) = \morses_j^{\grid_{k+1}}$.
Figure~\ref{fig:mergeTree} shows the merge tree for the Morse graphs in Example~\ref{ex:MorseGraphs}.

\subsection{Persistent homology}

The merge tree is an easy way to measure the changes in the Morse graphs, but it loses information about the gradient-like structure since it does not take into account the edges of the graphs.  A simple way to do that is to count the cycles in the graph.  (Note that while a Morse graph is acyclic as a directed graph, it may have cycles as an undirected graph.)  

Cycles are concatenations of forward and backward orbit segments.  The most dynamically significant cycles are created when there are two (directed) paths of the same length from a vertex $v_i$ to a vertex $v_j$.  We can count these by looking at the (directed) adjacency matrix $A$ for the Morse graph $\morseg$.  (Since $\morseg$ is acyclic, $A$ will be nilpotent.)  The $i,j$ entry of $A^m$ gives the number of (directed) paths from $v_i$ to $v_j$, so if it is greater than 1, then there is a cycle:  follow one path from $v_i$ to $v_j$, then the other backwards from $v_j$ back to $v_i$.  Unfortunately, this method can overcount the number of such cycles.  If we have two paths of length $m_1$ from $v_i$ to $v_j$ and two of length $m_2$ from $v_j$ to $v_k$, then $A^{m_1+m_2}$ will detect four paths from $v_i$ to $v_k$, implying multiple cycles, when in fact there are just combinations of the two existing cycles.  We can address this overcounting algebraically, in homology.

Homology for graphs is particularly simple:  $H_0$ counts the connected components, and $H_1$ counts the cycles algebraically.  (See \cite{Massey}*{\S VII.3}.) For simplicity, we will use $\R$ coefficients.    Persistence for Morse graphs induces persistence for homology:

\begin{prop}
Let $f:X\to X$ be a continuous map of a compact metric space, and let $\{\grid_k\}$ be a collection of grids for $X$, partially ordered by refinement.  For each $k$, let $\fmgk{k}:\grid_k\rightrightarrows \grid_k$ be the minimal multivalued map associated to $f$ on $\grid_k$.  Then for every $\grid_{k_1} < \grid_{k_2}$, there are homomorphisms $(\hmg_{\grid_{k_1}}^{\grid_{k_2}})_*: H_*(\morsegk{k_1}) \to H_*(\morsegk{k_2})$ such that $(\hmg_{\grid_{k_2}}^{\grid_{k_3}})_* \circ (\hmg_{\grid_{k_1}}^{\grid_{k_2}})_* = (\hmg_{\grid_{k_1}}^{\grid_{k_3}})_*$ for any $\grid_{k_1} < \grid_{k_2} < \grid_{k_3}$.
\end{prop}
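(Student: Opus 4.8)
The plan is to observe that this is pure functoriality. Proposition~\ref{prop:MorseGraphPersist} already provides a functor from the poset of grids (ordered by refinement) to the category of directed graphs, sending $\grid_k$ to $\morsegk{k}$ and each relation $\grid_{k_1} < \grid_{k_2}$ to the homomorphism $\hmg_{\grid_{k_1}}^{\grid_{k_2}}$, with the composition law $\hmg_{\grid_{k_2}}^{\grid_{k_3}} \circ \hmg_{\grid_{k_1}}^{\grid_{k_2}} = \hmg_{\grid_{k_1}}^{\grid_{k_3}}$. Composing this functor with the homology functor $H_*$ immediately yields the desired maps $(\hmg_{\grid_{k_1}}^{\grid_{k_2}})_*$ together with the stated composition identity. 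The only point that needs genuine verification is that a directed graph homomorphism really does induce a map on graph homology in a functorial way.

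The subtlety is that a graph homomorphism $\hmg$ need not send edges to edges. As noted in the discussion preceding Proposition~\ref{prop:MorseGraphChanges}, when two Morse sets $\morses_i$ and $\morses_j$ merge (so $\hm(\morses_i) = \hm(\morses_j)$), an edge of $\morsegk{k_1}$ joining them must collapse, since the Morse graph has no loops. Thus $\hmg$ is not a simplicial map in the strict sense. I would handle this in either of two equivalent ways. Conceptually, the cleanest is to pass to geometric realizations: $\hmg$ induces a continuous map $|\morsegk{k_1}| \to |\morsegk{k_2}|$ of one-dimensional CW complexes, mapping each edge either homeomorphically onto the corresponding edge or collapsing it to the image vertex, and singular homology is functorial for such maps. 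Concretely, though, it is just as easy to define the chain map directly on the simplicial chain complex $0 \to C_1 \to C_0 \to 0$: set $(\hmg)_\#(v) = \hmg(v)$ on $0$-chains, and on an oriented edge $e$ from $a$ to $b$ set $(\hmg)_\#(e)$ equal to the oriented edge from $\hmg(a)$ to $\hmg(b)$ when $\hmg(a) \ne \hmg(b)$, and $(\hmg)_\#(e) = 0$ when $\hmg(a) = \hmg(b)$.

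I would then verify the two routine facts that make this work. First, $(\hmg)_\#$ is a chain map, $\partial \circ (\hmg)_\# = (\hmg)_\# \circ \partial$; this is immediate from the two cases in the definition (in the collapsing case both sides vanish), and it guarantees an induced map $(\hmg)_*$ on $H_* = (H_0, H_1)$. Second, the assignment is functorial at the chain level, $(\psi \circ \hmg)_\# = \psi_\# \circ (\hmg)_\#$ and $(\mathrm{id})_\# = \mathrm{id}$, which one checks edge by edge, again splitting into the collapsing and non-collapsing cases. Passing to homology and combining this with the composition law from Proposition~\ref{prop:MorseGraphPersist} gives $(\hmg_{\grid_{k_2}}^{\grid_{k_3}})_* \circ (\hmg_{\grid_{k_1}}^{\grid_{k_2}})_* = (\hmg_{\grid_{k_1}}^{\grid_{k_3}})_*$.

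I expect the only real obstacle to be the bookkeeping around the edge-collapse: one must make sure the chain-level definition is independent of the chosen orientations of edges (reversing an edge's orientation negates its class, consistently on both sides) and that a collapsed edge is sent to $0$ rather than to a nonexistent loop. Once the chain map is set up to respect this, every remaining step is the standard functoriality of simplicial homology, so nothing deeper is required.
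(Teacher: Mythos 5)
Your proposal is correct and matches the paper's (implicit) argument: the paper states this proposition without proof, treating it as an immediate consequence of Proposition~\ref{prop:MorseGraphPersist} together with functoriality of graph homology, which is exactly your route. Your careful handling of the edge-collapse case (sending a collapsed edge to $0$ at the chain level, or equivalently passing to geometric realizations) supplies the one detail the paper leaves unstated, and it is handled correctly.
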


As we saw in Proposition~\ref{prop:MorseGraphChanges}, passing from a more refined grid $\grid$ to a coarser grid $\grid'$ can change $\morsegg{\grid}$ to $\morsegg{\grid'}$ by adding edges, adding  vertices, or merging  vertices.
We now examine the effect of each change on homology.  We first make some definitions for a Morse graph $\morseg$.  For a given vertex $v$, call the set of vertices above it $A(v):=\{u:u \succ v\}$, the vertices below $B(v)):=\{u:u \prec v\}$, and the  incomparable vertices $N(v):=\{u:u \incomp v\}$. For a pair of vertices $v \succ v'$, call the set of vertices in between $I(v,v'):=\{u: v\succ u \succ v'\} = B(v)\cap A(v')$.  Let $a(v)=\#(A(v))$ and $b(v)=\#(B(v))$.

Recall that when we merge two vertices $v$ and $v'$, we also merge the relevant edges (for example, edges $u\to v$ and $u \to v'$ are merged).

\begin{thm}
For grids $\grid < \grid'$, the graph $\morsegg{\grid'}$ can be obtained from $\morsegg{\grid}$ by a series of the following operations.
\begin{enumerate}
\item Adding a vertex (with no edges).
\item Adding an edge $v\to v'$, where $v$ and $v'$ are incomparable, $v$ is  maximal in $N(v')$ (that is, $A(v)\cap N(v')=\emptyset)$ and $v'$ is  minimal in $N(v)$ (that is, $B(v')\cap N(v)=\emptyset$).
\item Merging two vertices $v$ and $v'$, such that $v\succ v'$, there are no intermediate vertices (that is, $I(v,v')=\emptyset$), and $u\succ u'$ for all $u\in A(v')$ and $u' \in B(v)$.
\end{enumerate}
\end{thm}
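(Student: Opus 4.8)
The plan is to combine the existence of \emph{some} decomposition, already guaranteed by Proposition~\ref{prop:MorseGraphChanges}, with a local analysis of when a single elementary move --- one isolated vertex added, one edge added, or one pair of vertices merged --- takes a Morse graph to a Morse graph. The key point is that a Morse graph is exactly the comparability digraph of a finite strict partial order, hence acyclic, transitive, loopless and simple. So the real content is to identify, for each move type, the precise side conditions under which performing that move \emph{and nothing else} leaves the graph a comparability digraph; these conditions should turn out to be (1)--(3). I would first establish this local characterization and then address the sequencing of moves into valid intermediate Morse graphs.

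For the local characterization, adding an isolated vertex never disturbs acyclicity or transitivity, so operation (1) needs no hypotheses. For a single added edge $v\to v'$ with no further edges inserted: acyclicity forces $v'\not\succeq v$ and simplicity forces $v\not\succeq v'$, so $v\incomp v'$; transitivity then forces the edge $u\to v'$ for each $u\in A(v)$ (since $u\succ v\succ v'$) and $v\to w$ for each $w\in B(v')$, while the pairs in $A(v)\times B(v')$ require nothing new. Thus ``no forced edge'' means every $u\succ v$ already has $u\succ v'$ and every $w\prec v'$ already has $w\prec v$; since incomparability already gives $A(v)\cap B(v')=\emptyset$ (an element would force $v\prec v'$), this reduces to exactly $A(v)\cap N(v')=\emptyset$ and $B(v')\cap N(v)=\emptyset$, the conditions of (2), and the same computation shows they are sufficient. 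For a merge of $v\succ v'$ into a vertex $w$, the resulting order has $A(w)=A(v')\setminus\{v\}$ and $B(w)=B(v)\setminus\{v'\}$; a $2$-cycle arises exactly from a vertex in $I(v,v')=A(v')\cap B(v)$, and transitivity through $w$ forces $u\succ u'$ for all $u\in A(w)$ and $u'\in B(w)$. Demanding both no cycle and no forced edge is precisely that $u\succ u'$ for all $u\in A(v')$ and $u'\in B(v)$ --- the cases $u=v$ and $u'=v'$ holding automatically, while $u=u'$ would demand $u\succ u$ and so rules out $I(v,v')\neq\emptyset$ --- which is operation (3).

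The sequencing is the part I expect to be the main obstacle. I would add all new vertices first (isolated), and then maintain an order-preserving surjection $\psi$ from the current graph onto $\morsegg{\grid'}$ extending the induced map $\hmg$, growing edges and performing merges until $\psi$ is an isomorphism. The difficulty is that a fiber of $\hmg$ --- a set of Morse sets of $\grid$ that coalesce into one Morse set of $\grid'$ --- may consist of \emph{incomparable} vertices, whereas operation (3) only merges a comparable pair; such pairs must first be made comparable by edge additions, so edges and merges must be interleaved, and one must show that at every stage some legal move making progress exists. I would supply such a move by an extremal choice: to realize a missing relation $\psi(v)\succ\psi(v')$ with $v\incomp v'$, pick $v$ maximal among tails and $v'$ minimal among heads of still-missing relations, which forces the conditions of (2); to merge, pick a same-fiber comparable pair $v\succ v'$ that is minimal, so $I(v,v')=\emptyset$, and derive the rectangle condition of (3) from order-preservation of $\psi$. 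The crux is verifying that these extremal choices always satisfy the side conditions and that a suitable potential (missing relations plus fiber defect) strictly decreases, so that the process terminates exactly at $\morsegg{\grid'}$.
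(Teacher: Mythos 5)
Your overall strategy is the same as the paper's: decompose the passage from $\morsegg{\grid}$ to $\morsegg{\grid'}$ into elementary moves and use extremal (maximal/minimal) choices so that each single move satisfies the side conditions of (1)--(3). Your local characterization of the three moves is correct and in one respect sharper than the paper's exposition: the observation that the rectangle condition in (3), applied with $u=u'$, already forces $I(v,v')=\emptyset$ is a nice touch, and your analysis of which edges a single insertion $v\to v'$ forces (including the fact that the pairs in $A(v)\times B(v')$ come for free once $A(v)\cap N(v')=\emptyset$) is accurate.

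However, the part you yourself flag as ``the crux'' is precisely where the paper's proof does its work, and you have not supplied it. Two concrete points. First, for operation (2) you assert that choosing $v$ maximal among tails and $v'$ minimal among heads ``forces the conditions of (2)''; this is true but not automatic --- one must argue that any $y\in A(v)\cap N(v')$ would itself be the tail of a still-missing relation to $v'$ (the paper's version: $y$ cannot lie in $N(u')$ by maximality, so $u'\succ y\succ v\succeq u$, contradicting $u\incomp u'$), and the symmetric argument for $B(v')\cap N(v)$. Without this verification the induction has no base step. Second, your claim that the rectangle condition in (3) can be ``derived from order-preservation of $\psi$'' is wrong as stated: order-preservation only yields $\psi(u)\succeq\psi(u')$ in the target, not $u\succ u'$ in the current graph. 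What actually saves the argument is that for a covering pair ($I(v,v')=\emptyset$) any unrelated pair $u\in A(v')$, $u'\in B(v)$ must be \emph{incomparable} (a reverse relation $u'\succ u$ would place both in $I(v,v')$), so the missing edges can first be inserted via operation (2) before merging; this is exactly the clause the paper adds (``by adding edges as in (2) before merging, we can assume that $u\succ u'$''), and your proof needs it too. Your proposed potential (``missing relations plus fiber defect'') also needs care, since the temporary edges added between incomparable vertices inside a single fiber decrease neither count unless the fiber defect is defined to include incomparable same-fiber pairs; this is fixable but currently unspecified.
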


\begin{proof}
If coarsening the grid adds a vertex with edges, we can clearly add the vertex first, then add the edges.  

If it adds an edge $u\to u'$, we claim that  $u$ and $u'$ must be incomparable.  If $u\succ u'$, then there is already an edge $u\to u'$, and we cannot add one.  If $u'\succ u$, then adding an edge $u\succ u'$ means that the corresponding Morse sets are now in the same equivalence class, and thus the vertices are merged.  So $u$ and $u'$ must be incomparable.  Since $\morsegg{\grid'}$ is a Morse graph, adding an edge from $u \to u'$  implies that the resulting graph will have edges from every vertex in $\{u\} \cup A(u)$ to every vertex in $\{u'\} \cup B(u')$.  We can add these edges iteratively, starting with an edge $v \to v'$ that does not force the addition of any other edges (that is, $v$ is  maximal in $N(v')$  and $v'$ is  minimal in $N(v)$), then continuing with the resulting graph.  We claim that we can find such an edge by picking $v$ to be a maximal element of $(\{u\}\cup A(u))\cap N(u')$ and $v'$ to be a minimal element of $(\{u'\}\cup B(u'))\cap N(v)$.  Assume, for the sake of contradiction, that there exists a $y \in A(v)\cap N(v')$. Then $y$ is not  in $N(u')$ (that would contradict maximality of $v$), so $y$ is comparable to $u'$ but not to $v'$.  Since $u'\succeq v'$, that means that $u' \succ y$, so we have $u' \succ y \succ v \succeq u$, meaning that $u$ and $u'$ are comparable, which is a contradiction.  Thus $A(v)\cap N(v') = \emptyset$.  And since $B(v')\subset \{u'\}\cup B(u')$, the minimality of $v'$ implies that $B(v')\cap N(v) = \emptyset$.

We can merge two incomparable vertices $v$ and $u$ by first adding an edge from $v$ to  $u$.  So we can assume that $v\succ u$.  If $I(v,u)\ne\emptyset$, then, since $\morsegg{\grid'}$ is a Morse graph, merging $v$ and $u$ into one vertex also merges everything in $I(v,u)$ with $v$.  We can perform these mergers iteratively, starting by verging $v$ with a maximal member $v'$ of $I(v,u)$, that is, one with $I(v,v')=\emptyset$, continuing with the resulting graph by merging the new vertex $v\sim v'$ with a maximal member of $I(v\sim v',u)$, and so on.

Merging $v$ and $v'$ forces $u\succ u'$ for all $u\in A(v')$ and $u' \in B(v)$ (we already have $w\succ w'$ for all $w\in A(v)$ and $w' \in B(v')$, since $v\succ v'$).  If any such edge $u \to u'$ does not already exist, that is, $u \not\succ u'$, then $u \incomp u'$: if $u'\succ u$, then we would have $v \succ u' \succ u \succ v'$, contradicting the assumption that there are no intermediate vertices between $v$ and $v'$.  So, by adding edges as in (2) before merging, we can assume that $u\succ u'$ for all $u\in A(v')$ and $u' \in B(v)$.

\end{proof}

So, to understand the effect on homology of passing from $\morsegg{\grid}$ to $\morsegg{\grid'}$ for grids $\grid<\grid'$, it is sufficient to understand the effects of the three operations above.

\begin{thm}
Let $\morseg$ and $\morseg'$ be Morse graphs.
\begin{enumerate}
\item If $\morseg'$ is obtained from $\morseg$ by adding a vertex (with no edges), then \newline $\dim H_0(\morseg') = \dim H_0(\morseg) +1 $ and \newline $\dim H_1(\morseg') = \dim H_1(\morseg)$.
\item  If $\morseg'$ is obtained from $\morseg$ by adding an edge $v\to v'$, where $v$ and $v'$ are incomparable, $v$ is  maximal in $N(v')$ (that is, $A(v)\cap N(v')=\emptyset)$ and $v'$ is  minimal in $N(v)$ (that is, $B(v')\cap N(v)=\emptyset$), then
	\begin{enumerate}
	\item if $v$ and $v'$ are in different connected components of $\morseg$, then \newline
		 $\dim H_0(\morseg') = \dim H_0(\morseg) -1 $ and \newline $\dim 			H_1(\morseg') = \dim H_1(\morseg)$.
		 
	\item if $v$ and $v'$ are in the same connected component of $\morseg$, then \newline $\dim H_0(\morseg') = \dim H_0(\morseg)$ and \newline $\dim H_1(\morseg') = \dim H_1(\morseg) +1$.
		
	\end{enumerate}
\item If $\morseg'$ is obtained from $\morseg$ by merging two vertices $v$ and $v'$, such that $v\succ v'$, there are no intermediate vertices (that is, $I(v,v')=\emptyset$), and $u\succ u'$ for all $u\in A(v')$ and $u' \in B(v)$, then \newline $\dim H_0(\morseg') = \dim H_0(\morseg)$ and \newline $\dim H_1(\morseg') = \dim H_1(\morseg)  - a(v) - b(v')$.

\end{enumerate}
\end{thm}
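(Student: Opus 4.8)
The plan is to reduce every statement to the elementary homology of a finite graph regarded as a one-dimensional complex, treating the graph as undirected for homological purposes as the paper does. For such a complex with $V$ vertices, $E$ edges, and $c$ connected components one has $\dim H_0 = c$, and from the Euler-characteristic identity $V - E = \dim H_0 - \dim H_1$ the formula $\dim H_1 = E - V + c$. Since a Morse graph is simple (no loops, no multiple edges), the effect of each of the three operations on homology can be read off by tracking how it changes $V$, $E$, and $c$. Operation (1) is then immediate: adding an isolated vertex sends $(V,E,c) \mapsto (V+1, E, c+1)$, so $\dim H_0$ increases by $1$ while $\dim H_1 = E - V + c$ is unchanged.

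For operation (2) the first step is to note that the hypotheses that $v$ be maximal in $N(v')$ and $v'$ minimal in $N(v)$ are precisely the conditions, already established in the preceding theorem, under which transitivity forces no additional edges; hence passing to $\morseg'$ adds exactly one edge and no vertex. Because $v \incomp v'$ this edge is neither a loop nor a repeated edge, so $(V,E,c) \mapsto (V, E+1, c')$, and it remains to invoke the standard dichotomy for adjoining a single edge to a graph. If $v$ and $v'$ lie in distinct components then $c$ drops by $1$, giving $\dim H_0 \mapsto \dim H_0 - 1$ with $\dim H_1$ unchanged; if they lie in the same component then $c$ is unchanged and $\dim H_1 = E - V + c$ rises by $1$.

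The substantive case is (3), where the entire content is an edge count for the identification of $v$ and $v'$ into a single vertex $w$. Here $V$ drops by $1$, and since the edge $v \to v'$ already puts $v$ and $v'$ in one component, $c$ is unchanged, so $\dim H_0$ is unaffected. To count lost edges I would first use $I(v,v')=\emptyset$ to record the decompositions $A(v') \setminus \{v\} = A(v) \sqcup Z$ and $B(v) \setminus \{v'\} = B(v') \sqcup Y$, where $Z$ is the set of vertices above $v'$ but incomparable to $v$ and $Y$ the set below $v$ but incomparable to $v'$. The identification then collapses edges in three ways: the edge $v \to v'$ becomes a loop and is deleted ($1$ edge); for each $u \in A(v)$ the pair $u \to v$, $u \to v'$ becomes the single edge $u \to w$ ($a(v)$ edges); and for each $u' \in B(v')$ the pair $v \to u'$, $v' \to u'$ becomes the single edge $w \to u'$ ($b(v')$ edges). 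The main obstacle is to confirm that these are the only alterations of $E$: one must check that the edges meeting $Z$ and $Y$ are single, so nothing further is merged, and --- the delicate point --- that no new edge is forced. The latter is exactly what the hypothesis $u \succ u'$ for all $u \in A(v')$, $u' \in B(v)$ supplies, since it ensures that every edge demanded by transitivity of the merged graph, namely from $A(v')\setminus\{v\}$ to $B(v)\setminus\{v'\}$, is already present in $\morseg$ and is left untouched by the identification. With this in hand, $E$ decreases by exactly $1 + a(v) + b(v')$, whence $\dim H_1 = E - V + c$ changes by $-(1 + a(v) + b(v')) + 1 = -a(v) - b(v')$, as required.
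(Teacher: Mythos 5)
Your proposal is correct, and for the substantive parts (2b) and (3) it takes a genuinely different route from the paper. The paper works directly with homology classes: in (2b) it exhibits the new cycle $pe$ and uses the relation $p'e = p'(-p)+pe$ to show only one class is added, and in (3) it classifies the simple loops through the merged vertex, identifies the $a(v)+b(v')$ loops of the form $vuv'v$ that become trivial, and uses explicit chain relations to show each collapse costs exactly one dimension of $H_1$. You instead reduce everything to the Euler-characteristic bookkeeping $\dim H_1 = E - V + c$ and count the change in $(V,E,c)$ under each operation; the only real work is then the edge count in (3), where your decomposition of $A(v')\setminus\{v\}$ and $B(v)\setminus\{v'\}$ via $I(v,v')=\emptyset$, and your use of the hypothesis $u\succ u'$ to rule out forced new edges, correctly establishes that exactly $1+a(v)+b(v')$ edges are lost. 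Your argument is shorter and more mechanical; the paper's buys an explicit description of which cycles are born and which die under coarsening, which is more in the spirit of persistence even though it costs more homological algebra. Both are complete proofs.
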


\begin{proof}
The result in (1) is immediate, since $H_0$ counts connected components and $H_1$ counts loops.

For (2), we observe that since $v$ is maximal in $N(v')$ and $v'$ is minimal in $N(v)$, adding the edge does not force the existence of any other edges.
Assume first that $v$ and $v'$ are in different connected components.  Then adding the edge $v\to v'$ joins the two components, reducing $\dim H_0$ by one, and contributes nothing to $\dim H_1$.

Now consider the case with $v$ and $v'$ in the same connected component.  Then adding the edge $e$ from $v$ to $v'$ does not change the number of components, so $\dim H_0$ does not change.  Since $v$ and $v'$ are in the same connected component, there is a path $p$ from $v'$ to $v$ in the undirected graph.  This gives a new cycle, $pe$.  If $p'$ is another path from $v'$ to $v$,  then in homology the loop $p'e= p'(-p)+pe$, so adding $e$ adds only one to $\dim H_1$. 

For (3), since $v\succ v'$, they are already in the same connected component, and merging them does not change $\dim H_0$.
We observe that because there are no intermediate vertices, merging $v$ and $v'$ does not merge any other vertices, and because $u\succ u'$ for all $u\in A(v')$ and $u' \in B(v)$, it does not create any new edges.  

We show that merging does not create any new loops.  Let $\bar v$ be the new merged vertex.  Any simple loop $\bar v u_1u_2\dots u_n \bar v$ in $\morseg'$ corresponds to a loop in $\morseg$ of the form $vu_1u_2\dots u_n v$, $v'u_1u_2\dots u_n v'$, $v'u_1u_2\dots u_n vv'$, or $vu_1u_2\dots u_n v'v$, since $v$ and $v'$ are adjacent in $\morseg $ (thought of as undirected).  

However, merging can eliminate loops.  If $vuv'v$ (or $v'uvv'$) is a nontrivial loop in $\morseg$, then it becomes the trivial $\bar v u \bar v$ in $\morseg'$.  If $u$ is above $v$ or below $v'$, then $vuv'v$ is such a loop, since $v\succ v'$. Conversely, for any nontrivial loop $vuv'v$, $u$ must be above $v$ or below $v'$; otherwise we would have $v\succ u\succ v'$, contradicting the assumption that there are no intermediate vertices between $v$ and $v'$.
The sets $A(v)$ and $B(v')$ are disjoint, since $v\succ v'$, so the total number of such loops is $a(v) + b(v')$.

We observe that if $vu_1\ldots u_nv$ and $v'u_1\ldots u_nv'$  are two loops in $\morseg$, then in homology $vu_1\ldots u_nv = v'u_1\ldots u_nv'  + v'u_nvv' + v'vu_1v'$.  Similarly, if $vu_1\ldots u_nv$ and $vu_1\ldots u_nv'v$ are two loops in $\morseg$, then in homology, $vu_1\ldots u_nv'v = vu_1\ldots u_nv + vu_nv'v$.  Thus collapsing a loop of the form $vuv'v$ to a trivial loop reduces $\dim H_1$ by only one.  Since there are $a(v)+b(v')$ such loops, the result follows.

\end{proof}

\section{Local persistence}\label{sect:localPers}

The Morse graph helps us to understand the gradient-like behavior of the system.  To understand the recurrent behavior, we study the dynamics on individual Morse sets.  (The Conley index can  rigorously relate these multivalued map dynamics to the dynamics of the underlying system; see, for example, \cites{DF,DFT,akkmop,bmmp,Conley,con1,con2,con3,con4,con5}.)

Let $\grid < \grid'$ be two grids, and $\morseag{\grid} = (\morsedg{\grid}, \{\fmgg{\grid}_j:\morsesg{\grid}_j \rightrightarrows \morsesg{\grid}_j\})$ and $\morseag{\grid'} = (\morsedg{\grid'}, \{\fmgg{\grid'}_j:\morsesg{\grid'}_j \rightrightarrows \morsesg{\grid'}_j\})$ be the corresponding augmented Morse decompositions.  We have seen that $f$ induces a morphism of finest augmented Morse decompositions $\ha:\morseag{\grid}\to \morseag{\grid'}$, that is, a pair $(\hm,\{h_j\})$, where $\hm$ is a morphism between the finest Morse decompositions $ (\{\morsesg{\grid}_j\},\succeq^{\grid})$ and $\morsedg{\grid'}= (\{\morsesg{\grid'}_j\},\succeq^{\grid'})$, and for each Morse set $\morsesg{\grid}_j$, $h_j: \morsesg{\grid}_j\to\hm(\morsesg{\grid}_j)$ is a grid map morphism.  We will consider an individual Morse set $\morsesg{\grid}_j$, which we will generally denote simply by $\morses$; we denote the corresponding Morse set in $\grid'$, $\hm(\morses)$, by $\morses'$,  the map $h_j:\morses\to\morses'$ simply by $h$, and the maps $\fmgg{\grid}_j:\morses \rightrightarrows \morses$ and $\fmgg{\grid'}_j:\morses' \rightrightarrows \morses'$ by $\fm$ and $\fm'$, respectively.

As we pass from  $\grid$ to the coarser  $\grid'$, a Morse set can change in two ways.  The geometric realization of the Morse set can  grow, that is, $|\morses| \subsetneq |\morses'|$, and the dynamics can change as new edges  appear, that is, $h(\fm(G)) \subsetneq \fm'(h(G))$ for some $G\in\morses$.  We first discuss the case where the Morse set grows.

Just the mapping $h$ which takes grid elements in $\morses\subset\grid$ to coarser elements in $\morses'\subset\grid'$ can cause the geometric realization of the Morse set to grow.  We will have $|\morses| \subsetneq |h(\morses)|$ if there are $\grid$-grid elements $G\in\morses$ and $H\not\in\morses$  such that $|G|$ and $|H|$ are both subsets of $|h(G)|$; then $|\morses|$ does not include all of $|h(G)|$.  For example, in Figure~\ref{fig:gridMapEx}(A), we have $\morses = \{G_2\}$, but $|h(\morses)| = G_1\cup G_2$ in (B).

\begin{prop}
$h(\morses)$ is an $\fm'$-invariant set.
\end{prop}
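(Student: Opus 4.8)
The plan is to show that $h(\morses)$ satisfies both defining conditions for $\fm'$-invariance, namely $h(\morses)\subset\fm'(h(\morses))$ and $h(\morses)\subset\fm'^{-1}(h(\morses))$. The key fact I would lean on is that $\morses$ is itself $\fm$-invariant (being a Morse set), together with the grid map morphism property $h(\fm(G))\subset\fm'(h(G))$ established in Proposition~\ref{prop:gridmappersist} and Lemma~\ref{lem:augMorsePersist}. The strategy is to transport the invariance of $\morses$ forward through $h$, using the morphism property to relate $\fm$-orbits in $\morses$ to $\fm'$-orbits in $h(\morses)$.

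First I would verify the forward condition. Take an arbitrary element $G'\in h(\morses)$, so $G'=h(G)$ for some $G\in\morses$. Since $\morses$ is $\fm$-invariant, we have $\morses\subset\fm(\morses)$, so there exists $\tilde G\in\morses$ with $G\in\fm(\tilde G)$. Applying $h$ and the morphism property gives $h(G)\in h(\fm(\tilde G))\subset\fm'(h(\tilde G))$, with $h(\tilde G)\in h(\morses)$. Hence $G'\in\fm'(h(\tilde G))\subset\fm'(h(\morses))$, which establishes $h(\morses)\subset\fm'(h(\morses))$.

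Next I would prove the backward condition by the symmetric argument. Again take $G'=h(G)\in h(\morses)$ with $G\in\morses$. Since $\morses\subset\fm^{-1}(\morses)$, there is some $\hat G\in\morses$ with $G\in\fm^{-1}(\hat G)$, i.e. $\hat G\in\fm(G)$. The morphism property then yields $h(\hat G)\in h(\fm(G))\subset\fm'(h(G))=\fm'(G')$, so $G'\in\fm'^{-1}(h(\hat G))\subset\fm'^{-1}(h(\morses))$. Combining the two inclusions gives that $h(\morses)$ is $\fm'$-invariant.

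The main obstacle I anticipate is purely notational rather than mathematical: one must be careful that the single-valued map $h$ interacts correctly with the multivalued $\fm$ and $\fm'$, and in particular that the preimage condition for invariance is handled with the correct direction of the edges (the morphism property $h(\fm(G))\subset\fm'(h(G))$ controls forward edges directly but must be reread from $h(\hat G)\in\fm'(h(G))$ to obtain a statement about $\fm'^{-1}$). Once the quantifiers are set up correctly, both inclusions follow immediately from the morphism property and the $\fm$-invariance of the Morse set $\morses$, with no genuinely hard step.
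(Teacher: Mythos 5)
Your proof is correct and follows essentially the same route as the paper: both inclusions are derived from the grid map morphism property $h(\fm(G))\subset\fm'(h(G))$ together with the $\fm$-invariance of $\morses$. The only difference is that you unpack the argument element by element where the paper states it at the level of sets (e.g.\ $h(\morses)\subset h(\fm(\morses))\subset\fm'(h(\morses))$), which is a matter of presentation, not substance.
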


\begin{proof}
This follows from the fact that $h$ is a grid map morphism, so $h(\fm(G)) \subset \fm'(h(G))$ for all $G\in\morses$.
We need to show that $h(\morses)\subset \fm'(h(\morses))$ and $h(\morses)\subset (\fm')^{-1}(h(\morses))$.  For the first, we have $h(\morses) = h(\fm(\morses)) \subset \fm'(h(\morses))$.  For the second, the fact that $h$ is a grid map morphism implies that $h(\fm^{-1}(\morses)) \subset (\fm')^{-1}(h(\morses))$, so we have $h(\morses) = h(\fm^{-1}(\morses)) \subset (\fm')^{-1}(h(\morses))$.
\end{proof}

When the mapping $h:\morses\to\morses'$ is not surjective, $h(\morses)\subsetneq\morses'$, we see a change in the recurrent dynamics as we pass from $\grid$ to $\grid'$.
To better understand these changes in the size and shape of the Morse set, we can look at the persistent homology.  In \cite{djkklm}, the authors define persistent homology for Morse sets as finite topological spaces, in part to address the issue that in the grid setting, different Morse sets can have nonempty intersection.  Since we are studying Morse sets individually, we can also use cubical homology \cites{chomp,MrozekCubHomSurvey}.

Even when passing from $\fm$ to $\fm'$ does not increase the size of the Morse set, that is, even when $h(\morses)=\morses'$, it can still change the dynamics by growing the image, $h(\fm(G)) \subsetneq \fm'(h(G))$ for some $G\in\morses$.  Equivalently, there are edges in $\fm':\morses'\to\morses'$ that do not come from edges in $\fm:\morses\to\morses$.  (This will not happen for the minimal multivalued map associated to $f:X\to X$, since there is an $\fm'$-edge $G'\to H'$ if and only if there is an $\fm$-edge $G\to H$ for some $G\in h^{-1}(G')$ and $H \in h^{-1}(H')$, but it can occur for other methods of defining multivalued grip maps, discussed in Section~\ref{sect:timeSeries}.)

So, for the rest of this section, we assume that $h:\morses\to\morses'$ is surjective.  We cannot compare $\fm:\morses'\to\morses'$ and $\fm:\morses'\to\morses'$ directly, since they act on different spaces.  To understand the change in dynamics, we compare $\fm':\morses'\to\morses'$ to $\bar\fm:\morses\to\morses$, the map induced by $\fm$ on $\morses'$.  

More precisely, we define $\bar\fm := h \circ \fm \circ h^{-1}$.  Thus, thinking of $\bar\fm$ as a graph, we have an edge $G'\to H'$ if and only if $\fm$ has an edge $G\to H$ for some $G$ and $H$ such that $h(G)=G'$ and $h(H)=H'$.  We observe that $\bar\fm \subset \fm'$, that is, $\bar\fm(G') \subset \fm'(G')$ for all $G'\in\morses'$.

Since the Morse sets correspond to the strongly connected components of the multivalued map considered as a graph, the dynamics on any Morse set are transitive, meaning that for any $G$, $H \in \morses$, there is an $n>0$ such that $H\subset\fm^n(G)$.  Thus passing from $\fm$ to $\fm'$ will not cause a radical change in the dynamics; they will still be transitive.  However, it can cause one important change.  Recall that $\fm$ is mixing if there exists an $n>0$ such that for any $G$, $H\in\morses$ and any $m\ge n$, $H\subset\fm^m(G)$ (\cite{lm}*{\S4.5}).  Any mixing map is transitive, but not vice versa.  If $\fm$ is transitive but not mixing, then we can partition $\morses$ into subsets $\morses_1,\ldots,\morses_d$ for some $d$, where $\fm$ cyclically permutes the sets $\morses_i$ ($\fm(\morses_i) = \morses_{i+1\mod d}$)
and the restriction of the $d$th power of $\fm$ to each $\morses_i$, $\fm^d|_{\morses_i}:\morses_i\to\morses_i$, is mixing.  If $d$ is the minimal such integer, we call $d$ the period of $\fm$, $\per(\fm)$, and say that $\fm$ is $d$-periodic (so a mixing map is 1-periodic).  We see in Figure~\ref{fig:Mixing} that passing from $\fm$ to $\fm'$ can change the dynamics from non-mixing to mixing.

\begin{figure}

\begin{subfigure}{.5\textwidth}
\centering
\begin{tikzpicture}

\node[draw,circle] (v12) at (0,0) {$v_{12}$};
\node[draw,circle] (v11) at (0,2) {$v_{11}$};
\node[draw,circle,fill=lightgray] (v22) at (4,0) {$v_{22}$};
\node[draw,circle,fill=lightgray] (v21) at (4,2) {$v_{21}$};

\draw (v12) edge[-Latex, thick, bend right]  (v22);
\draw (v11) edge[-Latex, thick, bend right]  (v21);
\draw (v11) edge[-Latex, thick, bend right]  (v22);

\draw (v21) edge[-Latex, thick, bend right, color=red]  (v11);
\draw (v21) edge[-Latex, thick, bend right, color=red]  (v12);
\draw (v22) edge[-Latex, thick, bend right, color=red]  (v11);

\end{tikzpicture}
\caption{$\fm$ is 2-periodic.}
\label{fig:exMix1}
\end{subfigure}

\begin{subfigure}{.5\textwidth}
\centering
\begin{tikzpicture}

\node[draw,circle] (v12) at (0,0) {$v_{12}$};
\node[draw,circle] (v11) at (0,2) {$v_{11}$};
\node[draw,circle,fill=lightgray] (v22) at (4,0) {$v_{22}$};
\node[draw,circle,fill=lightgray] (v21) at (4,2) {$v_{21}$};

\draw (v12) edge[-Latex, thick, bend right]  (v22);
\draw (v11) edge[-Latex, thick, bend right]  (v21);
\draw (v11) edge[-Latex, thick, bend right]  (v22);

\draw (v21) edge[-Latex, thick, bend right, color=red]  (v11);
\draw (v21) edge[-Latex, thick, bend right, color=red]  (v12);
\draw (v22) edge[-Latex, thick, bend right, color=red]  (v11);

\draw (v11) edge[-Latex, thick, bend right, color=blue]  (v12);

\end{tikzpicture}
\caption{$\fm'$ is mixing.}
\label{fig:exMix2}
\end{subfigure}

\caption{Passing from $\grid$ to $\grid'$ can make the dynamics mixing.}
\label{fig:Mixing}
\end{figure}

\begin{prop}\label{prop:periods}
$\per(\bar\fm)$ divides $\per(\fm)$, and $\per(\fm')$ divides $\per(\bar\fm)$.
\end{prop}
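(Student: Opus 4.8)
The plan is to reduce both divisibilities to the classical characterization of the period of a strongly connected directed graph as the greatest common divisor of the lengths of its cycles, equivalently of all its closed walks (see \cite{lm}). The consequence I actually use is that the period divides the length of \emph{every} closed walk in the graph, and this single fact drives both claims. Before invoking it I will check that all three maps are transitive, so that their periods are even defined: $\fm$ and $\fm'$ are the restrictions of the dynamics to the strongly connected components $\morses$ and $\morses'$, hence transitive, and $\bar\fm$ is strongly connected because $h$ is surjective and $\fm$ is transitive. Indeed, given $G',H'\in\morses'$, I lift to $G,H\in\morses$ with $h(G)=G'$, $h(H)=H'$, take an $\fm$-path from $G$ to $H$, and project it by $h$ to obtain a $\bar\fm$-path from $G'$ to $H'$.

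For the first divisibility, $\per(\bar\fm)\mid\per(\fm)$, I will use that $h$ is a directed graph homomorphism from $\fm$ to $\bar\fm$: by the definition $\bar\fm = h\circ\fm\circ h^{-1}$, every $\fm$-edge $G\to H$ yields a $\bar\fm$-edge $h(G)\to h(H)$. Consequently any $\fm$-cycle of length $\ell$ projects to a closed walk of length $\ell$ in $\bar\fm$, so $\per(\bar\fm)$ divides $\ell$. As $\ell$ ranges over all $\fm$-cycle lengths, $\per(\bar\fm)$ is a common divisor of all of them, and therefore divides their greatest common divisor, which is $\per(\fm)$.

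For the second divisibility, $\per(\fm')\mid\per(\bar\fm)$, I will use the containment $\bar\fm\subset\fm'$ observed just before the statement: on the common vertex set $\morses'$, every $\bar\fm$-edge is an $\fm'$-edge. Hence every $\bar\fm$-cycle is an $\fm'$-cycle of the same length, so $\per(\fm')$ divides that length. Letting the length range over all $\bar\fm$-cycle lengths, $\per(\fm')$ divides their greatest common divisor, which is $\per(\bar\fm)$.

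The only real obstacle is the input fact that the period equals the gcd of cycle lengths and divides every closed-walk length; once it is available, the remainder is bookkeeping with gcd's. I would either cite it directly from \cite{lm} or sketch it from the cyclic partition $\morses = \morses_1\cup\dots\cup\morses_d$ used in the text: since each edge advances the class index by one modulo $d$, every closed walk has length divisible by $d$, while realizing $d$ as the gcd of the cycle lengths gives the reverse bound. The one subtlety worth flagging is that I apply ``period $=$ gcd of cycles'' to $\bar\fm$ and $\fm'$, both of which I must know are strongly connected, which is exactly the transitivity check carried out in the first paragraph.
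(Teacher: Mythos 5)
Your proposal is correct and follows essentially the same route as the paper: both rest on the characterization of the period as the gcd of closed-orbit lengths from \cite{lm}, together with the observations that $h$ sends closed $\fm$-orbits to closed $\bar\fm$-orbits and that every $\bar\fm$-orbit is an $\fm'$-orbit. The paper states this in two sentences; you have merely filled in the gcd bookkeeping and the (worthwhile, though omitted in the paper) check that $\bar\fm$ is transitive so its period is defined.
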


\begin{proof}
We use the fact that $\per(\fm)$ is the greatest common divisor of the set of lengths of closed $\fm$-orbits (\cite{lm}*{\S4.5}).   Since every $\fm$-orbit maps to an $\bar\fm$-orbit, and every $\bar\fm$-orbit is an $\fm'$-orbit, the result follows.
\end{proof}

When $h$ is surjective, we see that passing from $\grid$ to $\grid'$ can transform a non-mixing $\fm$ into a mixing $\fm'$ in two ways.  Either $h$ can combine two vertices that are in different elements of the periodic partition for $\fm$, or $\fm'$ can add an edge between two such vertices.  More precisely, we have the following.

\begin{thm}
Let $h:\morses\to\morses'$ be surjective.
\begin{enumerate}
\item Let $\fm$ be $d$-periodic ($d>1$) and  let $h(G)=h(H)$ for some $G\in\morses_i$ and $H\in\morses_j$, where $\morses_i$ and $\morses_j$ are elements of the $d$-periodic partition of $\morses$ and $i-j$ is relatively prime to $d$.  Then $\bar\fm$ is mixing.

\item Let $\bar\fm$ be $d$-periodic ($d>1$) and  let $\fm'$ add an edge from $G'$ to $H'$ for some $G'\in\morses'_i$ and $H'\in\morses'_j$, where $\morses'_i$ and $\morses'_j$ are elements of the $d$-periodic partition of $\morses'$ and $i-j +1$ is relatively prime to $d$.  (That is, $H'\in\fm'(G')$ but $H'\not\in\bar\fm(G')$.)  Then $\fm'$ is mixing.

\end{enumerate}
\end{thm}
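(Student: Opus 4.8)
The plan is to reduce the mixing condition to a statement about periods and then exploit the characterization, already used in Proposition~\ref{prop:periods}, that the period of a transitive map equals the greatest common divisor of the lengths of its closed orbits. Recall that a transitive map is mixing precisely when its period is $1$. Both $\bar\fm$ and $\fm'$ are transitive: $\bar\fm$ is the image of the transitive map $\fm$ under the surjection $h$, and $\fm'$ contains $\bar\fm$. Hence in each part it suffices to exhibit a single closed orbit whose length is coprime to $d$, since Proposition~\ref{prop:periods} already guarantees that the period of the relevant map divides $d$.

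For part (1), I would first record how the $d$-periodic partition controls orbit lengths: if $\fm(\morses_k)=\morses_{k+1 \bmod d}$, then any $\fm$-orbit starting in $\morses_k$ lands in $\morses_{k+s \bmod d}$ after $s$ steps, so an orbit from a point of $\morses_j$ to a point of $\morses_i$ has length $\equiv i-j \pmod d$. Since $\morses$ is a Morse set it is strongly connected, so there is an $\fm$-orbit from $H\in\morses_j$ to $G\in\morses_i$; call its length $\ell$, so $\ell\equiv i-j\pmod d$ (and $\ell\ge 1$, as $i\ne j$). Applying $h$ sends this to a $\bar\fm$-orbit from $h(H)$ to $h(G)$, and since $h(G)=h(H)$ it is a \emph{closed} $\bar\fm$-orbit of length $\ell$. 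Therefore $\per(\bar\fm)\mid\ell$; combined with $\per(\bar\fm)\mid d$ from Proposition~\ref{prop:periods}, this gives $\per(\bar\fm)\mid\gcd(d,\ell)=\gcd(d,i-j)=1$, so $\bar\fm$ is mixing.

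Part (2) runs along the same lines, now taking the $d$-periodic partition to be that of $\bar\fm$ on $\morses'$. By transitivity of $\bar\fm$ there is a $\bar\fm$-orbit from $H'\in\morses'_j$ to $G'\in\morses'_i$, of length $\ell\equiv i-j\pmod d$. Prepending the new edge $G'\to H'$ (which lies in $\fm'$ but not in $\bar\fm$) produces a closed $\fm'$-orbit at $G'$ of length $\ell+1\equiv i-j+1\pmod d$. As before, $\per(\fm')\mid(\ell+1)$, while $\per(\fm')\mid d$ by Proposition~\ref{prop:periods}, so $\per(\fm')\mid\gcd(d,i-j+1)=1$ and $\fm'$ is mixing.

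The delicate points are all bookkeeping, and I expect them to be the main obstacle to a fully rigorous write-up rather than any conceptual difficulty. One must check that the image under $h$ of an $\fm$-orbit is genuinely a $\bar\fm$-orbit (immediate from $\bar\fm=h\circ\fm\circ h^{-1}$ together with $h(\fm(G))\subset\bar\fm(h(G))$), that the connecting orbits exist (strong connectedness of the Morse set and transitivity of $\bar\fm$), and above all that the length-modulo-$d$ computation is correct, since the coprimality hypotheses $\gcd(i-j,d)=1$ and $\gcd(i-j+1,d)=1$ are exactly what forces the gcd down to $1$. Keeping the index conventions for the periodic partitions consistent across $\fm$, $\bar\fm$, and $\fm'$ is where an error would most easily slip in.
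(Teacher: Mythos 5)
Your proposal is correct and follows essentially the same route as the paper: both arguments exhibit a single closed orbit of length congruent to $i-j$ (resp.\ $i-j+1$) mod $d$ and invoke the characterization of $\per$ as the gcd of closed-orbit lengths, together with Proposition~\ref{prop:periods}, to force the period down to $1$. The only difference is cosmetic: you obtain the connecting orbit from $H$ to $G$ directly from strong connectedness and track its length mod $d$, whereas the paper constructs it explicitly as a length-$(i-j)$ segment followed by a length-$nd$ segment using mixing of $\fm^d$ on a partition element.
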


Observe that by Proposition~\ref{prop:periods}, $\fm'$ is mixing if $\bar\fm$ is.  Figure~\ref{fig:Mixing} shows an example of part (2) of the theorem.

\begin{proof}
We again use the fact that the period is the greatest common denominator of the set of lengths of closed orbits.  For notational convenience, assume that we have ordered the periodic partition elements so that $i>j$.

For (1), observe that $\fm^{i-j}(\morses_j)=\morses_i$, so, in particular, there is a $G_0\in\morses_i$ such that $G_0 \in \fm^{i-j}(H)$.  Since $\fm^d|_{\morses_i}:\morses_i\to\morses_i$ is mixing, for some $n>0$, there is an $\fm$-orbit  of length $nd$ from $G_0$ to $G$.  By concatenating, we get an $\fm$-orbit of length $i-j+nd$ from $H$ to $G$; its image under $h$ is a closed $\bar\fm$-orbit from $h(H)\sim h(G)$ to itself of the same length.  Since $\gcd(d, i-j+nd) = 1$, $\bar\fm$ is mixing.

The proof of (2) is similar.  There is a $G'_0 \in \morses'_i$ such that $G'_0 \in \bar\fm^{i-j-1}(H)$.  Since $\bar\fm^d|_{\morses'_i}:\morses'_i\to\morses'_i$ is mixing, for some $n>0$, there is an $\bar\fm$-orbit  of length $nd$ from $G'_0$ to $G'$.  Concatenating the orbit from $H'$ to $G'_0$, the orbit from $G'_0$ to $G'$, and the (length-one) $\fm'$-orbit from $G'$ to $H'$,
and recalling that every $\bar\fm$-orbit is \emph{a fortiori} an $\fm'$-orbit, we get a closed orbit of length $i-j+nd+1$.  Again, since  $\gcd(d, i-j+1+nd) = 1$, $\fm'$ is mixing.

\end{proof}

%
%
%
%

%
%
%
%

\section{Time series} \label{sect:timeSeries}

In applications, we may not have complete information about the map $f:X\to X$, or it may be too difficult to compute.
We can also get multivalued grid maps from time series, either from sampling the phase space directly or from observations.  Reconstructing dynamics from time series is a very active area of research.  See, for example, \cites{ABMS,GBM,bmmp,MMRS,PEREIRA20156026,MZR,PerNotices,Gholizadeh2018ASS,baker}.  In particular, in \cite{mik} the authors create a Morse decomposition for the time series map.

\subsection{Observations}

We largely follow the presentation in \cite{mik}.
Let $\pi:X\to\R^m$ be a continuous map.  We think of $\pi$ as an observation of the state of the system.  It can measure $m$ different scalar quantities, or a single quantity $\pi_0$ at $m$ time steps:  $$\pi(x) = (\pi_0(x), \pi_0(f(x)),\ldots,\pi_0(f^{m-1}(x))).$$

Our data consist of observations of a finite number of finite orbit segments.  More precisely, let $\{x_i\}_{i=1}^K$ be a set of initial points in $X$.  For each $x_i$, we take observations from the first $N_i$ points on the orbit of $x_i$, and define $y_i^j := \pi(f^j(x_i))$ for $0 \le j \le N_i-1$.  Let ${\mathbf y}_i = (y_i^0,\ldots,y_i^{N-i-1})$ denote the observations from initial point $x_i$, and $O=\{{\mathbf y}_i\}$ the set of all observations.

Now take a grid $\grid$ on $\pi(X)\subset\R^m$.  We define the multivalued map $\fmg_O:\grid\rightrightarrows\grid$ associated to $O$ on $\grid$ by
 \begin{align*}
 \fmg_O(G) = \{ H\in\grid: \ & \text{there exist ${\mathbf y}_i\in O$ and $j$, $0\le j<N_i-1$,}\\ &\text{such that   $y_i^j\in G$, $y_i^{j+1}\in H$} \}.
 \end{align*}
As with the minimal multivalued map associated to $f$ in Section~\ref{sect:grids}, we trim any stranded vertices from the graph associated to $\fmg_O$.

We can define a Morse decomposition for $\fmg_O$ just as before:  Let $\morsed_O$ be the finest Morse decomposition associated to $O$, corresponding to the strongly connected components of $\fmg_O$.
We now show that we get persistence over grid refinement.  We begin with the following analogue of Proposition~\ref{prop:gridmappersist}.

\begin{prop}\label{prop:gridmappersistO}
Let $O$ be a set of observations, and let $\{\grid_k\}$ be a collection of grids for $\pi(X)$, partially ordered by refinement.  For each $k$, let $\fmgk{k}_O:\grid_k\rightrightarrows \grid_k$ be the multivalued map associated to $O$ on  $\grid_k$.
Then for every $\grid_{k_1} < \grid_{k_2}$, there is a morphism $h_{k1}^{k2}:\grid_{k_1} \to \grid_{k_2}$, and if $\grid_{k_1} < \grid_{k_2} < \grid_{k_3}$, then $h_{k_2}^{k_3}\circ h_{k_1}^{k_2} = h_{k_1}^{k_3}$.

\end{prop}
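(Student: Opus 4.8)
The plan is to reproduce the proof of Proposition~\ref{prop:gridmappersist} almost verbatim, changing only the edge criterion: for the observation map an edge $G\to H$ is witnessed by a pair of consecutive data points rather than by a nonempty intersection with $f(G)$. Fix grids $\grid<\grid'$ with associated observation maps $\fmg_O$ and $\fmgg{\grid'}_O$. For each $G\in\grid$ I would let $h(G)$ be the element of $\grid'$ containing $G$. This cell is in fact unique, since $\Int$ is monotone and the grid axioms force $\Int(G)$ to sit inside the interior of exactly one cell of $\grid'$; hence $h$ is a well-defined map $\grid\to\grid'$, and this canonicity is what will make the cocycle identity automatic.

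To check that $h$ is a grid map morphism, suppose $H\in\fmg_O(G)$. By the definition of the observation map there exist ${\mathbf y}_i\in O$ and an index $j$, $0\le j<N_i-1$, with $y_i^j\in G$ and $y_i^{j+1}\in H$. Since $G\subset h(G)$ and $H\subset h(H)$, the \emph{same} consecutive pair satisfies $y_i^j\in h(G)$ and $y_i^{j+1}\in h(H)$, so $h(H)\in\fmgg{\grid'}_O(h(G))$ directly from the definition of $\fmgg{\grid'}_O$. As $H\in\fmg_O(G)$ was arbitrary, $h(\fmg_O(G))\subset\fmgg{\grid'}_O(h(G))$. The cocycle identity $h_{k_2}^{k_3}\circ h_{k_1}^{k_2}=h_{k_1}^{k_3}$ then follows exactly as before: if $G\subset G'\subset G''$ are the containing cells in $\grid_{k_1}<\grid_{k_2}<\grid_{k_3}$, uniqueness makes $G''$ the cell of $\grid_{k_3}$ containing $G$, so both composites send $G$ to $G''$.

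The one bookkeeping point with no counterpart in Proposition~\ref{prop:gridmappersist} is that we trim stranded vertices from each $\fmgk{k}_O$, so I must verify that $h$ descends to the trimmed graphs. This is immediate: a morphism sends an incoming (resp.\ outgoing) edge at $G$ to an incoming (resp.\ outgoing) edge at $h(G)$, a collapsed edge merely becoming a loop (which still supplies both), so $h(G)$ is non-stranded whenever $G$ is. I do not expect a genuine obstacle here — the argument is if anything cleaner than the map-based case, since the witness for an edge is a concrete data point whose cell-membership transfers upward along the inclusions $G\subset h(G)$ with no appeal to continuity or outer approximation; the only care needed is the loop remark ensuring compatibility with trimming.
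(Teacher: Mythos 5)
Your proof is correct and follows essentially the same route as the paper's: define $h$ by inclusion of cells, observe that the witnessing consecutive pair $y_i^j\in G$, $y_i^{j+1}\in H$ also witnesses the edge $h(G)\to h(H)$, and get the cocycle identity from composition of inclusions. The extra remarks on uniqueness of the containing cell and compatibility with trimming stranded vertices are sound but not part of the paper's (terser) argument.
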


\begin{proof}
The proof is similar to that of Proposition~\ref{prop:gridmappersist}.  Take grids $\grid < \grid'$.  Since $\grid$ is a refinement of $\grid'$, by definition for any $G\in\grid$ there is a $G'\in\grid'$ such that $G\subset G'$; define $h$ by $h(G) := G'$.  If $y_i^j\in G$ and $y_i^{j+1}\in H$, then $y_i^j\in h(G)$ and $y_i^{j+1}\in h(H)$.  Thus we have $h(\fm_O(G)) \subset \fm'_O(h(G))$.  It follows from the definition that  $h_{k_2}^{k_3}\circ h_{k_1}^{k_2} = h_{k_1}^{k_3}$.
\end{proof}

\begin{cor}
Let $O$ be a set of observations, and let $\{\grid_k\}$ be a collection of grids for $\pi(X)$, partially ordered by refinement.  For each $k$, let $\fmgk{k}_O:\grid_k\rightrightarrows \grid_k$ be the multivalued map associated to $O$ on  $\grid_k$.  Then for every $\grid_{k_1} < \grid_{k_2}$, there is a morphism $\ha_{\grid_{k_1}}^{\grid_{k_2}}: \morseak{k_1} \to \morseak{k_2}$ such that $\ha_{\grid_{k_2}}^{\grid_{k_3}} \circ \ha_{\grid_{k_1}}^{\grid_{k_2}} = \ha_{\grid_{k_1}}^{\grid_{k_3}}$ for any $\grid_{k_1} < \grid_{k_2} < \grid_{k_3}$.
\end{cor}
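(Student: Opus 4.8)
The plan is to obtain this as a direct consequence of the two preceding results, exactly as the earlier Theorem followed from Proposition~\ref{prop:gridmappersist} and Lemma~\ref{lem:augMorsePersist}. The key observation is that Lemma~\ref{lem:augMorsePersist} is stated and proved purely at the level of graphs: it takes as input a family of grid map morphisms $h_{k_1}^{k_2}$ satisfying the cocycle condition $h_{k_2}^{k_3}\circ h_{k_1}^{k_2}=h_{k_1}^{k_3}$, and produces the augmented Morse decomposition morphisms $\ha_{\grid_{k_1}}^{\grid_{k_2}}$ together with the analogous identity. Nothing in its proof uses that the underlying maps $\fmgk{k}$ arise from the action of $f$; it uses only that directed graph homomorphisms carry orbits to orbits and hence preserve strongly connected components and the induced partial order. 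Consequently the Lemma applies verbatim with the observation-induced maps $\fmgk{k}_O$ in place of $\fmgk{k}$.

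First I would invoke Proposition~\ref{prop:gridmappersistO} to produce, for every $\grid_{k_1}<\grid_{k_2}$, a grid map morphism $h_{k_1}^{k_2}:\grid_{k_1}\to\grid_{k_2}$ satisfying $h_{k_2}^{k_3}\circ h_{k_1}^{k_2}=h_{k_1}^{k_3}$ whenever $\grid_{k_1}<\grid_{k_2}<\grid_{k_3}$. This is precisely the hypothesis of Lemma~\ref{lem:augMorsePersist}. Feeding these morphisms into the Lemma (with $\fmgk{k}_O$ as the relevant maps) yields the desired morphisms $\ha_{\grid_{k_1}}^{\grid_{k_2}}:\morseak{k_1}\to\morseak{k_2}$ of finest augmented Morse decompositions, together with the composition identity, which is exactly the assertion of the corollary.

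The one point that needs a word of care, and which I would flag as the only real bookkeeping step, is the trimming of stranded vertices that we perform so that each $\fmgk{k}_O$ is closed and the Morse decompositions are defined. I would check that the morphisms $h$ of Proposition~\ref{prop:gridmappersistO} restrict to morphisms between the trimmed graphs: if $G\in\grid_{k_1}$ survives trimming, then it has both an incoming and an outgoing edge in $\fmgk{k_1}_O$, and the morphism property $h(\fm_O(G))\subset\fm'_O(h(G))$ (together with the analogous statement for the inverse) forces $h(G)$ to have both as well, so $h(G)$ also survives trimming. Hence $h$ carries non-stranded vertices to non-stranded vertices and the restriction is well defined. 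Beyond this, there is no substantive obstacle: the corollary is a formal combination of Proposition~\ref{prop:gridmappersistO} and Lemma~\ref{lem:augMorsePersist}, the genuine content being the verification -- already recorded in the proof of Proposition~\ref{prop:gridmappersistO} -- that refinement produces grid map morphisms for maps built from observations rather than from $f$ directly.
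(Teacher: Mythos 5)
Your proof is correct and matches the paper's, which likewise derives the corollary immediately from Proposition~\ref{prop:gridmappersistO} and Lemma~\ref{lem:augMorsePersist}. Your additional check that the grid map morphisms respect the trimming of stranded vertices is a reasonable point of care that the paper leaves implicit.
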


\begin{proof}
This follows from Proposition~\ref{prop:gridmappersistO} and Lemma~\ref{lem:augMorsePersist}.
\end{proof}

As a consequence, all of the results on global and local persistence from Sections~\ref{sect:globalPers} and \ref{sect:localPers} apply to the maps $\fmg_O$ arising from time series of observations.

\subsection{Sampled dynamics}

Instead of taking observations, we can also gather information about the map $f:X\to X$ by sampling the phase space directly.
Abstractly, in our setting, the two approaches are essentially the same, but they are conceptually different ways of looking at the dynamics, so  we treat them separately.  We generally follow the presentation in \cite{MMRS}.

Our data consist of a sample of the dynamics, in the form of pairs of points, $D:=\{(x_i,y_i): y_i=f(x_i) \}_{i=1}^K$.  For a grid $\grid$ on $X$, we define the multivalued map $\fmg_D:\grid\rightrightarrows\grid$ associated to $D$ on $\grid$ by
$$\fmg_D(G) = \{H \in \grid: \text{ there exists $(x_i,y_i)\in D$ with $x_i\in G$, $y_i\in H$} \}.$$

We proceed just as we did with the observations. We trim any stranded vertices from the graph associated to $\fmg_D$.
We define $\morsed_D$ to be the finest Morse decomposition associated to $D$, corresponding to the strongly connected components of $\fmg_D$.
And we get persistence over grid refinement. 

\begin{prop}\label{prop:gridmappersistD}
Let $D$ be a set of sampled data, and let $\{\grid_k\}$ be a collection of grids for $X$, partially ordered by refinement.  For each $k$, let $\fmgk{k}_D:\grid_k\rightrightarrows \grid_k$ be the multivalued map associated to $D$ on  $\grid_k$.
Then for every $\grid_{k_1} < \grid_{k_2}$, there is a morphism $h_{k1}^{k2}:\grid_{k_1} \to \grid_{k_2}$, and if $\grid_{k_1} < \grid_{k_2} < \grid_{k_3}$, then $h_{k_2}^{k_3}\circ h_{k_1}^{k_2} = h_{k_1}^{k_3}$.

\end{prop}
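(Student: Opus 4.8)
The plan is to mirror the proof of Proposition~\ref{prop:gridmappersist} almost verbatim, since the only change is the mechanism by which edges are generated in the grid map. The statement asserts exactly the same conclusion about refinement morphisms, so the structure of the argument should be identical; what differs is the verification that the refinement map $h$ respects edges, and here edges come from the sampled pairs in $D$ rather than from the action of $f$ on grid elements.

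First I would take grids $\grid < \grid'$ and, exactly as before, use the definition of refinement to choose for each $G\in\grid$ an element $G'\in\grid'$ with $G\subset G'$; this defines the single-valued map $h$ by $h(G):=G'$. The content of the proposition is that $h$ is a grid map morphism, that is, $h(\fmg_D(G))\subset\fmgg{\grid'}_D(h(G))$ for every $G$. To check this, suppose $H\in\fmg_D(G)$. By the definition of $\fmg_D$, there is a sampled pair $(x_i,y_i)\in D$ with $x_i\in G$ and $y_i\in H$. Since $G\subset h(G)$ and $H\subset h(H)$, the \emph{same} pair $(x_i,y_i)$ witnesses $x_i\in h(G)$ and $y_i\in h(H)$, so by the definition of $\fmgg{\grid'}_D$ we get $h(H)\in\fmgg{\grid'}_D(h(G))$. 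As $H$ was arbitrary, $h(\fmg_D(G))\subset\fmgg{\grid'}_D(h(G))$, so $h$ is a morphism.

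Finally, the cocycle identity $h_{k_2}^{k_3}\circ h_{k_1}^{k_2} = h_{k_1}^{k_3}$ follows, just as in Proposition~\ref{prop:gridmappersist}, because each $h$ is defined by the inclusion of a grid element into a coarser grid element, and the composition of such inclusions is the inclusion into the coarsest element; so the two maps agree on every $G\in\grid_{k_1}$.

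I do not expect any genuine obstacle here. The substance of the original proposition was that grid refinement preserves the edge relation, and for the minimal map associated to $f$ this required the observation that $G\subset h(G)$ forces $f(G)\cap H\ne\emptyset$ to persist into the coarser grid. For sampled dynamics the corresponding point is even more transparent, since the edge is certified by a single explicit data pair $(x_i,y_i)$ that continues to lie in the enlarged grid elements; no appeal to the outer-approximation property or to continuity of $f$ is needed. The only thing to be mildly careful about is that $\grid$ is a grid for $X$ (rather than for $\pi(X)$ as in the observations case), but since $x_i,y_i\in X$ this causes no difficulty.
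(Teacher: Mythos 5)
Your proof is correct and follows the paper's argument essentially verbatim: define $h$ by the inclusion of each grid element into the coarser element containing it, observe that the same sampled pair $(x_i,y_i)$ witnessing an edge $G\to H$ also witnesses the edge $h(G)\to h(H)$, and note that the cocycle identity holds because compositions of inclusions are inclusions. No differences worth noting.
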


\begin{proof}
The proof is essentially identical to that of Proposition~\ref{prop:gridmappersistO}.  Take grids $\grid < \grid'$.  Since $\grid$ is a refinement of $\grid'$, by definition for any $G\in\grid$ there is a $G'\in\grid'$ such that $G\subset G'$; define $h$ by $h(G) := G'$.  If $x_i\in G$ and $y_i\in H$, then $x_i\in h(G)$ and $y_i\in h(H)$.  Thus we have $h(\fm_D(G)) \subset \fm'_D(h(G))$.  It follows from the definition that  $h_{k_2}^{k_3}\circ h_{k_1}^{k_2} = h_{k_1}^{k_3}$.
\end{proof}

\begin{cor}
Let $D$ be a set of sampled data, and let $\{\grid_k\}$ be a collection of grids for $X$, partially ordered by refinement.  For each $k$, let $\fmgk{k}_D:\grid_k\rightrightarrows \grid_k$ be the multivalued map associated to $D$ on  $\grid_k$.  Then for every $\grid_{k_1} < \grid_{k_2}$, there is a morphism $\ha_{\grid_{k_1}}^{\grid_{k_2}}: \morseak{k_1} \to \morseak{k_2}$ such that $\ha_{\grid_{k_2}}^{\grid_{k_3}} \circ \ha_{\grid_{k_1}}^{\grid_{k_2}} = \ha_{\grid_{k_1}}^{\grid_{k_3}}$ for any $\grid_{k_1} < \grid_{k_2} < \grid_{k_3}$.
\end{cor}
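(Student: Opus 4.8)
The plan is to deduce this corollary exactly as the analogous statement for observations was obtained: by feeding the grid-map persistence of Proposition~\ref{prop:gridmappersistD} into the abstract transfer lemma, Lemma~\ref{lem:augMorsePersist}. First I would apply Proposition~\ref{prop:gridmappersistD} to the collection $\{\grid_k\}$ together with the sampled-data maps $\{\fmgk{k}_D\}$. This produces, for every refinement $\grid_{k_1} < \grid_{k_2}$, a grid map morphism $h_{k_1}^{k_2}:\grid_{k_1}\to\grid_{k_2}$, and it guarantees the functoriality identity $h_{k_2}^{k_3}\circ h_{k_1}^{k_2} = h_{k_1}^{k_3}$ for every chain $\grid_{k_1} < \grid_{k_2} < \grid_{k_3}$. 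The relevant point is that $h$ is built purely from the refinement relation ($h(G)=G'$ whenever $G\subset G'$) and that every data edge ($x_i\in G$, $y_i\in H$) persists to $x_i\in h(G)$, $y_i\in h(H)$; none of this depends on how the multivalued map was generated, which is exactly why the sampled-data case goes through verbatim.

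Next I would observe that Lemma~\ref{lem:augMorsePersist} is stated abstractly: it accepts \emph{any} collection of grids ordered by refinement, equipped with multivalued maps $\fmgk{k}:\grid_k\rightrightarrows\grid_k$ admitting grid-map persistence, and returns the corresponding persistence of the finest augmented Morse decompositions, namely morphisms $\ha_{\grid_{k_1}}^{\grid_{k_2}}:\morseak{k_1}\to\morseak{k_2}$ satisfying $\ha_{\grid_{k_2}}^{\grid_{k_3}}\circ\ha_{\grid_{k_1}}^{\grid_{k_2}}=\ha_{\grid_{k_1}}^{\grid_{k_3}}$. Since the hypotheses of Lemma~\ref{lem:augMorsePersist} are precisely the output of Proposition~\ref{prop:gridmappersistD}, the conclusion follows immediately; the morphisms $\ha$ are assembled as in the lemma, with $\hm$ sending a strongly connected component of $\fmgk{k_1}_D$ to the one containing its image under $h_{k_1}^{k_2}$ and each local piece $h_j$ given by restriction of $h_{k_1}^{k_2}$.

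The only point requiring care — and the mild obstacle here — is the well-definedness of $\morseak{k}$: a finest augmented Morse decomposition arises from the strongly connected components of a \emph{closed} multivalued map, so one must check that each $\fmgk{k}_D$ is closed. I would dispatch this by recalling that, by construction, we trim every stranded vertex from the graph of $\fmgk{k}_D$, so each map is closed and each $\morseak{k}$ is well-defined. The morphisms restrict correctly to the trimmed vertex sets because the morphism property forces an edge at $G$ to produce an edge at $h_{k_1}^{k_2}(G)$, so non-stranded vertices map to non-stranded vertices; recurrence and strong connectedness are likewise preserved by the orbit-image argument already used in the proof of Lemma~\ref{lem:augMorsePersist}.
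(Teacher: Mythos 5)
Your proposal matches the paper's proof exactly: the corollary is deduced by combining Proposition~\ref{prop:gridmappersistD} with Lemma~\ref{lem:augMorsePersist}, precisely as you describe. The additional remarks on trimming stranded vertices and well-definedness are consistent with the paper's conventions and do not alter the argument.
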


\begin{proof}
This follows from Proposition~\ref{prop:gridmappersistD} and Lemma~\ref{lem:augMorsePersist}.
\end{proof}

Thus all of the results on persistence from Sections~\ref{sect:globalPers} and \ref{sect:localPers} apply to the maps $\fmg_D$ arising from time series of sampled dynamics as well.

\subsection{Generalizations}

In applications, the time series data may be noisy, and we may not want to construct grid maps with images corresponding to every data point, as we do above.  Instead, we may require that they meet some type of frequency threshold.  We discuss two such methods, and their relation to persistence.

One such method is discussed in \cite{djkklm}.  Translating their approach slightly to our setting, of a grid $\grid$ and sampled data points $D=\{(x_i,y_i): y_i=f(x_i) \}_{i=1}^K$, they define functions $n(G,H) := \#\{x_i: x_i\in G, y_i\in H\}$ and $n_{max}(\grid):= \max_{G,H\in\grid}\{n(G,H)\}$.  They then define the grid map ${\hatfmg_D}:\grid\rightrightarrows\grid$  by
$${\hatfmg_D}(G):= \{H:n(G,H) > \mu\cdot n_{max}(\grid) \}, $$
for some threshold parameter $\mu$.  (They then take the convex hull, but we can ignore that for our purposes.)

For a fixed threshold $\mu$, this method will not give persistence over grid refinement.  If $\grid<\grid'$ and $H\in{\hatfmg_D}(G)$, it will not necessarily be 
true that $h(H) \in {\hatfmgg{\grid'}_D}(h(G))$.  Let $M=M(\grid,\grid') := \max_{G'\in\grid'}\#(h^{-1}(G'))$, the maximum number of $\grid$-grid elements any $\grid'$-grid element is subdivided into.
Then $n_{max}(\grid')$ can be as great as $M^2\cdot n_{max}(\grid)$, if there are elements $G'$, $H'\in\grid'$ such that $n(G,H) = n_{max}(\grid)$ for all $G \in h^{-1}(G')$ and $H \in h^{-1}(H')$.  At the same time, there can be $G'$, $H'\in\grid'$ such that $n(G',H') = n(G,H)$ for some $G \in h^{-1}(G')$ and $H \in h^{-1}(H')$, if there are no data points going from any other element of $h^{-1}(G')$ to any other element of $h^{-1}(H')$.  Thus $n(G,H) > \mu\cdot n_{max}(\grid)$ does not imply $n(h(G),h(H)) > \mu\cdot n_{max}(\grid')$.

However, if we can choose the threshold $\mu$ to depend on the grids $\grid$ and $\grid'$ (and \emph{not} on the data $D$), we can get persistence.  Since $n(h(G),h(H)) \ge n(G,H)$ and $n_{max}(\grid')\le M^2\cdot n_{max}(\grid)$, if we choose a threshold $\mu(\grid') \le \frac{\mu(\grid)}{M^2}$, we will get persistence.

Morita et al.\ present another method in \cite{mik}.  They define $v(G) := \#\{x_i:x_i \in G\}$ and  the transition probability $t(G,H): =\frac{n(G,H)}{v(G)}$. They then define the grid map ${\tildefmg_D}:\grid\rightrightarrows\grid$  by
$${\tildefmg_D}(G):= \{H:t(G,H) \ge \lambda\cdot t(H,G) \text{ and } n(G,H)\ge \mu \}, $$
for some threshold parameters $\lambda$ and $\mu$.  This definition will not give persistence, and there is no way to get persistence by choosing thresholds based solely on the grids $\grid$ and $\grid'$: we can have $t(G,H) > t(H,G)$, but $\frac{t(h(G),h(H))}{t(h(H),h(G))}$ arbitrarily small, if there are many data points going from the other elements of $h^{-1}(H)$ to the other elements of $h^{-1}(G)$.  However, 
 we may be able to get persistence if we construct thresholds based on characteristics of the data as well as of the grids.  This is not necessarily an unreasonable requirement, given that in practice we may want to choose refinements based on the density of the data points anyway.

Both of these results can be generalized.  Characterizing persistent methods for constructing grid maps from time series will be the subject of future work.



\begin{bibdiv}
\begin{biblist}

\bib{Akin}{book}{
      author={Akin, Ethan},
       title={The general topology of dynamical systems},
      series={Graduate Studies in Mathematics},
   publisher={American Mathematical Society, Providence, RI},
        date={1993},
      volume={1},
        ISBN={0-8218-3800-8},
         url={https://doi.org/10.1090/gsm/001},
      review={\MR{1219737}},
}

\bib{ABMS}{article}{
      author={Alexander, Zachary},
      author={Bradley, Elizabeth},
      author={Meiss, James~D.},
      author={Sanderson, Nicole~F.},
       title={Simplicial multivalued maps and the witness complex for dynamical
  analysis of time series},
        date={2015},
     journal={SIAM J. Appl. Dyn. Syst.},
      volume={14},
      number={3},
       pages={1278\ndash 1307},
         url={https://doi.org/10.1137/140971415},
      review={\MR{3369984}},
}

\bib{Alongi}{book}{
      author={Alongi, John~M.},
      author={Nelson, Gail~S.},
       title={Recurrence and topology},
      series={Graduate Studies in Mathematics},
   publisher={American Mathematical Society, Providence, RI},
        date={2007},
      volume={85},
        ISBN={978-0-8218-4234-8; 0-8218-4234-X},
         url={https://doi.org/10.1090/gsm/085},
      review={\MR{2330357}},
}

\bib{AGGKMO}{incollection}{
      author={Arai, Zin},
      author={Gameiro, Marcio},
      author={Gedeon, Tomas},
      author={Kokubu, Hiroshi},
      author={Mischaikow, Konstantin},
      author={Oka, Hiroe},
       title={Graph-based topological approximation of saddle-node bifurcation
  in maps},
        date={2012},
   booktitle={Far-from-equilibrium dynamics},
      series={RIMS K\^{o}ky\^{u}roku Bessatsu, B31},
   publisher={Res. Inst. Math. Sci. (RIMS), Kyoto},
       pages={225\ndash 241},
      review={\MR{3156802}},
}

\bib{akkmop}{article}{
      author={Arai, Zin},
      author={Kalies, William},
      author={Kokubu, Hiroshi},
      author={Mischaikow, Konstantin},
      author={Oka, Hiroe},
      author={Pilarczyk, Pawe\l},
       title={A database schema for the analysis of global dynamics of
  multiparameter systems},
        date={2009},
        ISSN={1536-0040},
     journal={SIAM J. Appl. Dyn. Syst.},
      volume={8},
      number={3},
       pages={757\ndash 789},
         url={https://doi.org/10.1137/080734935},
      review={\MR{2533624}},
}

\bib{AW}{article}{
      author={Ayala, Jos\'{e}},
      author={Kliemann, Wolfgang},
       title={Topological dynamics on finite directed graphs},
        date={2018},
        ISSN={0166-8641},
     journal={Topology Appl.},
      volume={241},
       pages={345\ndash 362},
         url={https://doi.org/10.1016/j.topol.2018.04.007},
      review={\MR{3794173}},
}

\bib{baker}{article}{
      author={Baker, Anthony~W.},
       title={Lower bounds on entropy via the {C}onley index with application
  to time series},
        date={2002},
        ISSN={0166-8641},
     journal={Topology Appl.},
      volume={120},
      number={3},
       pages={333\ndash 354},
         url={https://doi.org/10.1016/S0166-8641(01)00083-9},
      review={\MR{1897266}},
}

\bib{BK}{article}{
      author={Ban, Hyunju},
      author={Kalies, William~D.},
       title={{A Computational Approach to Conley's Decomposition Theorem}},
        date={2006},
        ISSN={1555-1415},
     journal={Journal of Computational and Nonlinear Dynamics},
      volume={1},
      number={4},
       pages={312\ndash 319},
         url={https://doi.org/10.1115/1.2338651},
}

\bib{con1}{article}{
      author={Batko, Bogdan},
       title={Weak index pairs and the {C}onley index for discrete multivalued
  dynamical systems. {P}art {II}: {P}roperties of the index},
        date={2017},
     journal={SIAM J. Appl. Dyn. Syst.},
      volume={16},
      number={3},
       pages={1587\ndash 1617},
         url={https://doi.org/10.1137/16M1097584},
      review={\MR{3687843}},
}

\bib{BKMW}{article}{
      author={Batko, Bogdan},
      author={Kaczynski, Tomasz},
      author={Mrozek, Marian},
      author={Wanner, Thomas},
       title={Linking combinatorial and classical dynamics: {C}onley index and
  {M}orse decompositions},
        date={2020},
        ISSN={1615-3375},
     journal={Found. Comput. Math.},
      volume={20},
      number={5},
       pages={967\ndash 1012},
         url={https://doi.org/10.1007/s10208-020-09444-1},
      review={\MR{4156992}},
}

\bib{bmmp}{article}{
      author={Batko, Bogdan},
      author={Mischaikow, Konstantin},
      author={Mrozek, Marian},
      author={Przybylski, Mateusz},
       title={Conley index approach to sampled dynamics},
        date={2020},
     journal={SIAM J. Appl. Dyn. Syst.},
      volume={19},
      number={1},
       pages={665\ndash 704},
         url={https://doi.org/10.1137/19M1254404},
      review={\MR{4080379}},
}

\bib{con2}{article}{
      author={Batko, Bogdan},
      author={Mrozek, Marian},
       title={Weak index pairs and the {C}onley index for discrete multivalued
  dynamical systems},
        date={2016},
     journal={SIAM J. Appl. Dyn. Syst.},
      volume={15},
      number={2},
       pages={1143\ndash 1162},
         url={https://doi.org/10.1137/15M1046691},
      review={\MR{3513850}},
}

\bib{bejm}{article}{
      author={Bauer, U.},
      author={Edelsbrunner, H.},
      author={Jab\l{o}\'{n}ski, G.},
      author={Mrozek, M.},
       title={\v{C}ech-{D}elaunay gradient flow and homology inference for
  self-maps},
        date={2020},
        ISSN={2367-1726},
     journal={J. Appl. Comput. Topol.},
      volume={4},
      number={4},
       pages={455\ndash 480},
         url={https://doi.org/10.1007/s41468-020-00058-8},
      review={\MR{4163432}},
}

\bib{BGHKMOP}{article}{
      author={Bush, Justin},
      author={Gameiro, Marcio},
      author={Harker, Shaun},
      author={Kokubu, Hiroshi},
      author={Mischaikow, Konstantin},
      author={Obayashi, Ippei},
      author={Pilarczyk, Pawe\l},
       title={Combinatorial-topological framework for the analysis of global
  dynamics},
        date={2012},
        ISSN={1054-1500},
     journal={Chaos},
      volume={22},
      number={4},
       pages={047508, 16},
         url={https://doi.org/10.1063/1.4767672},
      review={\MR{3388721}},
}

\bib{Conley}{book}{
      author={Conley, Charles},
       title={Isolated invariant sets and the {M}orse index},
      series={CBMS Regional Conference Series in Mathematics},
   publisher={American Mathematical Society, Providence, R.I.},
        date={1978},
      volume={38},
        ISBN={0-8218-1688-8},
      review={\MR{511133}},
}

\bib{Day}{incollection}{
      author={Day, Sarah},
       title={Dynamics and chaos for maps and the {C}onley index},
        date={2018},
   booktitle={Rigorous numerics in dynamics},
      series={Proc. Sympos. Appl. Math.},
      volume={74},
   publisher={Amer. Math. Soc., Providence, RI},
       pages={175\ndash 194},
      review={\MR{3792795}},
}

\bib{DF}{article}{
      author={Day, Sarah},
      author={Frongillo, Rafael},
       title={Sofic shifts via {C}onley index theory: computing lower bounds on
  recurrent dynamics for maps},
        date={2019},
     journal={SIAM J. Appl. Dyn. Syst.},
      volume={18},
      number={3},
       pages={1610\ndash 1642},
         url={https://doi.org/10.1137/18M1192007},
      review={\MR{4002713}},
}

\bib{DFT}{article}{
      author={Day, Sarah},
      author={Frongillo, Rafael},
      author={Trevi\~{n}o, Rodrigo},
       title={Algorithms for rigorous entropy bounds and symbolic dynamics},
        date={2008},
     journal={SIAM J. Appl. Dyn. Syst.},
      volume={7},
      number={4},
       pages={1477\ndash 1506},
         url={https://doi.org/10.1137/070688080},
      review={\MR{2470974}},
}

\bib{DK}{article}{
      author={Day, Sarah},
      author={Kalies, William~D.},
       title={Rigorous computation of the global dynamics of integrodifference
  equations with smooth nonlinearities},
        date={2013},
        ISSN={0036-1429},
     journal={SIAM J. Numer. Anal.},
      volume={51},
      number={6},
       pages={2957\ndash 2983},
         url={https://doi.org/10.1137/120903129},
      review={\MR{3124898}},
}

\bib{djkklm}{article}{
      author={Dey, Tamal~K.},
      author={Juda, Mateusz},
      author={Kapela, Tomasz},
      author={Kubica, Jacek},
      author={Lipi\'{n}ski, Micha\l},
      author={Mrozek, Marian},
       title={Persistent homology of {M}orse decompositions in combinatorial
  dynamics},
        date={2019},
        ISSN={1536-0040},
     journal={SIAM J. Appl. Dyn. Syst.},
      volume={18},
      number={1},
       pages={510\ndash 530},
         url={https://doi.org/10.1137/18M1198946},
      review={\MR{3922908}},
}

\bib{EH}{book}{
      author={Edelsbrunner, Herbert},
      author={Harer, John~L.},
       title={Computational topology},
   publisher={American Mathematical Society, Providence, RI},
        date={2010},
        ISBN={978-0-8218-4925-5},
         url={https://doi.org/10.1090/mbk/069},
        note={An introduction},
      review={\MR{2572029}},
}

\bib{ejm}{article}{
      author={Edelsbrunner, Herbert},
      author={Jab\l{o}\'{n}ski, Grzegorz},
      author={Mrozek, Marian},
       title={The persistent homology of a self-map},
        date={2015},
        ISSN={1615-3375},
     journal={Found. Comput. Math.},
      volume={15},
      number={5},
       pages={1213\ndash 1244},
         url={https://doi.org/10.1007/s10208-014-9223-y},
      review={\MR{3394709}},
}

\bib{GBM}{article}{
      author={Garland, Joshua},
      author={Bradley, Elizabeth},
      author={Meiss, James~D.},
       title={Exploring the topology of dynamical reconstructions},
        date={2016},
        ISSN={0167-2789},
     journal={Phys. D},
      volume={334},
       pages={49\ndash 59},
         url={https://doi.org/10.1016/j.physd.2016.03.006},
      review={\MR{3545969}},
}

\bib{Gholizadeh2018ASS}{article}{
      author={Gholizadeh, S.},
      author={Zadrozny, W.},
       title={A short survey of topological data analysis in time series and
  systems analysis},
        date={2018},
     journal={ArXiv},
      volume={abs/1809.10745},
}

\bib{ghmkk}{article}{
      author={Goullet, Arnaud},
      author={Harker, Shaun},
      author={Mischaikow, Konstantin},
      author={Kalies, William~D.},
      author={Kasti, Dinesh},
       title={Efficient computation of {L}yapunov functions for {M}orse
  decompositions},
        date={2015},
        ISSN={1531-3492},
     journal={Discrete Contin. Dyn. Syst. Ser. B},
      volume={20},
      number={8},
       pages={2418\ndash 2451},
         url={https://doi.org/10.3934/dcdsb.2015.20.2419},
      review={\MR{3423241}},
}

\bib{paG}{article}{
      author={Guih\'{e}neuf, Pierre-Antoine},
       title={Dynamical properties of spatial discretizations of a generic
  homeomorphism},
        date={2015},
        ISSN={0143-3857},
     journal={Ergodic Theory Dynam. Systems},
      volume={35},
      number={5},
       pages={1474\ndash 1523},
         url={https://doi.org/10.1017/etds.2013.108},
      review={\MR{3365731}},
}

\bib{Hunt}{article}{
      author={Hunt, Fern~Y.},
       title={Finite precision representation of the {C}onley decomposition},
        date={2001},
        ISSN={1040-7294},
     journal={J. Dynam. Differential Equations},
      volume={13},
      number={1},
       pages={87\ndash 105},
         url={https://doi.org/10.1023/A:1009092431497},
      review={\MR{1822213}},
}

\bib{con3}{article}{
      author={Kaczynski, T.},
      author={Mrozek, M.},
       title={Conley index for discrete multi-valued dynamical systems},
        date={1995},
        ISSN={0166-8641},
     journal={Topology Appl.},
      volume={65},
      number={1},
       pages={83\ndash 96},
         url={https://doi.org/10.1016/0166-8641(94)00088-K},
      review={\MR{1354383}},
}

\bib{chomp}{book}{
      author={Kaczynski, Tomasz},
      author={Mischaikow, Konstantin},
      author={Mrozek, Marian},
       title={Computational homology},
      series={Applied Mathematical Sciences},
   publisher={Springer-Verlag, New York},
        date={2004},
      volume={157},
        ISBN={0-387-40853-3},
         url={https://doi.org/10.1007/b97315},
      review={\MR{2028588}},
}

\bib{aacr}{article}{
      author={Kalies, W.~D.},
      author={Mischaikow, K.},
      author={VanderVorst, R. C. A.~M.},
       title={An algorithmic approach to chain recurrence},
        date={2005},
        ISSN={1615-3375},
     journal={Found. Comput. Math.},
      volume={5},
      number={4},
       pages={409\ndash 449},
         url={https://doi.org/10.1007/s10208-004-0163-9},
      review={\MR{2189545}},
}

\bib{KKV}{article}{
      author={Kalies, William~D.},
      author={Kasti, Dinesh},
      author={Vandervorst, Robert},
       title={An algorithmic approach to lattices and order in dynamics},
        date={2018},
     journal={SIAM J. Appl. Dyn. Syst.},
      volume={17},
      number={2},
       pages={1617\ndash 1649},
         url={https://doi.org/10.1137/17M1139606},
      review={\MR{3810499}},
}

\bib{lm}{book}{
      author={Lind, Douglas},
      author={Marcus, Brian},
       title={An introduction to symbolic dynamics and coding},
   publisher={Cambridge University Press, Cambridge},
        date={1995},
        ISBN={0-521-55124-2; 0-521-55900-6},
         url={https://doi.org/10.1017/CBO9780511626302},
      review={\MR{1369092}},
}

\bib{lp}{article}{
      author={Luzzatto, Stefano},
      author={Pilarczyk, Pawe\l},
       title={Finite resolution dynamics},
        date={2011},
        ISSN={1615-3375},
     journal={Found. Comput. Math.},
      volume={11},
      number={2},
       pages={211\ndash 239},
         url={https://doi.org/10.1007/s10208-010-9083-z},
      review={\MR{2776399}},
}

\bib{MZR}{article}{
      author={Maleti\'{c}, Slobodan},
      author={Zhao, Yi},
      author={Rajkovi\'{c}, Milan},
       title={Persistent topological features of dynamical systems},
        date={2016},
        ISSN={1054-1500},
     journal={Chaos},
      volume={26},
      number={5},
       pages={053105, 14},
         url={https://doi.org/10.1063/1.4949472},
      review={\MR{3501790}},
}

\bib{Massey}{book}{
      author={Massey, William~S.},
       title={A basic course in algebraic topology},
      series={Graduate Texts in Mathematics},
   publisher={Springer-Verlag, New York},
        date={1991},
      volume={127},
        ISBN={0-387-97430-X},
      review={\MR{1095046}},
}

\bib{MMRS}{article}{
      author={Mischaikow, K.},
      author={Mrozek, M.},
      author={Reiss, J.},
      author={Szymczak, A.},
       title={Construction of symbolic dynamics from experimental time series},
        date={1999},
     journal={Phys. Rev. Lett.},
      volume={82},
       pages={1144\ndash 1147},
         url={https://link.aps.org/doi/10.1103/PhysRevLett.82.1144},
}

\bib{con4}{article}{
      author={Mischaikow, Konstantin},
      author={Mrozek, Marian},
       title={Chaos in the {L}orenz equations: a computer-assisted proof},
        date={1995},
        ISSN={0273-0979},
     journal={Bull. Amer. Math. Soc. (N.S.)},
      volume={32},
      number={1},
       pages={66\ndash 72},
         url={https://doi.org/10.1090/S0273-0979-1995-00558-6},
      review={\MR{1276767}},
}

\bib{mik}{article}{
      author={Morita, Hidetoshi},
      author={Inatsu, Masaru},
      author={Kokubu, Hiroshi},
       title={Topological computation analysis of meteorological time-series
  data},
        date={2019},
        ISSN={1536-0040},
     journal={SIAM J. Appl. Dyn. Syst.},
      volume={18},
      number={2},
       pages={1200\ndash 1222},
         url={https://doi.org/10.1137/18M1184746},
      review={\MR{3968235}},
}

\bib{mrGrid}{article}{
      author={Mrozek, Marian},
       title={An algorithm approach to the {C}onley index theory},
        date={1999},
        ISSN={1040-7294},
     journal={J. Dynam. Differential Equations},
      volume={11},
      number={4},
       pages={711\ndash 734},
         url={https://doi.org/10.1023/A:1022615629693},
      review={\MR{1725408}},
}

\bib{MrozekCubHomSurvey}{incollection}{
      author={Mrozek, Marian},
       title={Topological dynamics: rigorous numerics via cubical homology},
        date={2012},
   booktitle={Advances in applied and computational topology},
      series={Proc. Sympos. Appl. Math.},
      volume={70},
   publisher={Amer. Math. Soc., Providence, RI},
       pages={41\ndash 73},
         url={https://doi.org/10.1090/psapm/070/588},
      review={\MR{2963601}},
}

\bib{Norton}{incollection}{
      author={Norton, Douglas~E.},
       title={Discrete approximations and computer investigations of isolated
  invariant sets},
        date={2000},
   booktitle={Communications in difference equations ({P}oznan, 1998)},
   publisher={Gordon and Breach, Amsterdam},
       pages={283\ndash 290},
      review={\MR{1792012}},
}

\bib{Osi}{book}{
      author={Osipenko, George},
       title={Dynamical systems, graphs, and algorithms},
      series={Lecture Notes in Mathematics},
   publisher={Springer-Verlag, Berlin},
        date={2007},
      volume={1889},
        ISBN={978-3-540-35593-9; 3-540-35593-6},
        note={Appendix A by N. B. Ampilova and Appendix B by Danny Fundinger},
      review={\MR{2269800}},
}

\bib{OsiC}{article}{
      author={Osipenko, George},
      author={Campbell, Stephen},
       title={Applied symbolic dynamics: attractors and filtrations},
        date={1999},
        ISSN={1078-0947},
     journal={Discrete Contin. Dynam. Systems},
      volume={5},
      number={1},
       pages={43\ndash 60},
         url={https://doi.org/10.3934/dcds.1999.5.43},
      review={\MR{1664473}},
}

\bib{PerNotices}{article}{
      author={Perea, Jose~A.},
       title={Topological times series analysis},
        date={2019},
        ISSN={0002-9920},
     journal={Notices Amer. Math. Soc.},
      volume={66},
      number={5},
       pages={686\ndash 694},
      review={\MR{3929469}},
}

\bib{PEREIRA20156026}{article}{
      author={Pereira, Cássio~M.M.},
      author={{de Mello}, Rodrigo~F.},
       title={Persistent homology for time series and spatial data clustering},
        date={2015},
        ISSN={0957-4174},
     journal={Expert Systems with Applications},
      volume={42},
      number={15},
       pages={6026 \ndash  6038},
  url={http://www.sciencedirect.com/science/article/pii/S0957417415002407},
}

\bib{con5}{article}{
      author={Stolot, Kinga},
       title={Homotopy {C}onley index for discrete multivalued dynamical
  systems},
        date={2006},
        ISSN={0166-8641},
     journal={Topology Appl.},
      volume={153},
      number={18},
       pages={3528\ndash 3545},
         url={https://doi.org/10.1016/j.topol.2006.03.009},
      review={\MR{2270603}},
}

\bib{Szymczak}{article}{
      author={Szymczak, Andrzej},
       title={Hierarchy of stable {M}orse decompositions},
        date={2013-05},
        ISSN={1077-2626},
     journal={IEEE Transactions on Visualization and Computer Graphics},
      volume={19},
      number={5},
       pages={799?810},
         url={https://doi.org/10.1109/TVCG.2012.147},
}

\end{biblist}
\end{bibdiv}

\end{document}